\newtheorem*{theorem*}{Theorem}
\renewcommand{\thetheoremName}
\newtheorem{theorem}{Theorem}[section]
\newtheorem{Theorem}{Theorem}
\newtheorem{proposition}[theorem]{Proposition}
\newtheorem{corollary}[theorem]{Corollary}
\newtheorem{definition}[theorem]{Definition}
\newtheorem{example}[theorem]{Example}
\newtheorem{remark}[theorem]{Remark}
\numberwithin{equation}{section}
\begin{document}

\title[First Eigenvalue of the Laplacian  of a Geodesic Ball]{First Eigenvalue of the Laplacian of a Geodesic Ball and  Area-Based Symmetrization of its Metric Tensor}

\author[V. Gimeno]{Vicent Gimeno*}
\address{Departament de Matem\`{a}tiques- IMAC,
Universitat Jaume I, Castell\'{o}, Spain.}
\email{gimenov@uji.es}
%\author[V. Palmer]{Vicente Palmer**}
%\address{Departament de Matem\`{a}tiques- INIT,
%Universitat Jaume I, Castellon, Spain.}
%\email{palmer@mat.uji.es}
\author[E. Sarrion-Pedralva]{Erik Sarrion-Pedralva**}
\address{Departament de Matem\`{a}tiques-IMAC,
Universitat Jaume I, Castell\'o, Spain. }
\email{eriksarrionpedralva@gmail.com}
%\thanks{}
\thanks{* Work partially supported by the Research Program of University Jaume I Project UJI-B2018-35, and DGI -MINECO grant (FEDER) MTM2017-84851-C2-2-P}
\thanks{** Work partially supported by the Research Program of University Jaume I Project UJI-B2018-35, and DGI -MINECO grant (FEDER) MTM2017-84851-C2-2-P, and grant for predoctoral research GVA-ESF ACIF/2019/096.}
\keywords{ First Dirichlet Eigenvalue, Symmetrization, Laplace Operator, Geodesic Ball}
\subjclass[2020]{ Primary 53C20; Secondary 58C40}

\begin{abstract}
Given a Riemmanian manifold,  we provide a new method to compute a sharp upper bound for the first eigenvalue of the Laplacian for the Dirichlet problem on a geodesic ball of radius less than the injectivity radius of the manifold. This upper bound is obtained by  transforming  the metric tensor into a rotationally symmetric metric tensor that preserves the area of the geodesic spheres.   The provided upper bound can be computed using only the area function of the geodesic spheres contained in the geodesic ball and it is sharp in the sense that the first eigenvalue of geodesic ball coincides with our upper bound if and only if the mean curvature pointed inward of each geodesic sphere is a radial function.
\end{abstract}

\maketitle

\section{Introduction}\label{Intr} 
Let $(M,g)$ be a $n$-dimensional Riemannian manifold, and let $\Omega\subset M$ be a precompact domain with smooth boundary $\partial \Omega$. The first eigenvalue $\lambda_{1,g}(\Omega)$ of the Laplacian for the Dirichlet problem on $\Omega$ is the smallest $\lambda\in \mathbb{R}$ such that there exists a non trivial associated eigenfunction $\phi$ satisfying
$$
\left\{
	\begin{array}{rcll}
		\Delta_g\phi+\lambda\phi&=0,&\text{ on }&\Omega,\\
		\phi &=0,&\text{ on }&\partial \Omega,
	\end{array}
\right.
$$
where $\Delta_g$ is the Laplacian operator with respect to the metric tensor $g$, \emph{i.e.}, 
$$
    \Delta_g=\frac{1}{\sqrt{\det g}}\sum_{i,j=1}^n\frac{\partial}{\partial x^i}\left(\sqrt{\det g\,}g^{ij}\frac{\partial }{\partial x^j}\right)
$$
in local coordinates $(x^1,\cdots,x^n)$. Upper and lower bounds for the first eigenvalue have been widely studied in terms of geometric invariants of the domain $\Omega$.

In \cite{Cheng2} and \cite{Cheng1} Cheng obtained upper and lower bounds for the first eigenvalue of the Laplacian for the Dirichlet {problem on a geodesic ball as follows:}  {I}f the Ricci curvatures ${\rm Ric}_g$ of $M$ are {bounded from below} by the Ricci curvatures of a  $n$-dimensional simply connected real space form $(\mathbb{M}_\kappa^n,g_\kappa)$ of constant sectional curvature $\kappa$, \emph{i.e.}, 
$${\rm Ric}_g\geq(n-1)\kappa,$$ 
then the first eigenvalue $\lambda_{1,g}({\rm B}_R(p))$ of the Laplacian for the Dirichlet problem of a geodesic ball ${\rm B}_R(p)$ of radius $R$ centered at $p\in M$ is bounded from above by
\begin{equation}\label{eqCheng1}
	\lambda_{1,g}({\rm B}_R(p))\leq\lambda_1(R,\kappa),    
\end{equation} 
where $\lambda_1(R,\kappa)$ is the first Dirichlet eigenvalue of  a geodesic ball of radius $R$ in $\mathbb{M}_\kappa^n$.
Conversely, if the sectional curvatures of $M$ are bounded {from above} by the sectional curvature of  $(\mathbb{M}_\kappa^n,g_\kappa)$, \emph{i.e.}, 
$${\rm sec}_g\leq\kappa,$$
then the first eigenvalue $\lambda_{1,g}({\rm B}_R(p))$ of the Laplacian for the Dirichlet problem of a geodesic ball ${\rm B}_R(p)$ of radius $R<\min\{{\rm inj}_g(p), \pi/\sqrt{\kappa}\}$\footnote{where ${\rm inj}_g(p)$ is the injectivity radius of $p$ and $\pi/\sqrt{\kappa}$ is replaced by $+\infty$ if $\kappa\leq 0$.}  is bounded  {from below} by
\begin{equation}\label{eqCheng2}
    \lambda_1({\rm B}_R(p))\geq\lambda_1(R,\kappa).
\end{equation}
Moreover, inequalities  \eqref{eqCheng1} and \eqref{eqCheng2} are sharp because equality is attained in both inequalities if and only if ${\rm B}_R(p)$ is isometric to the geodesic ball of radius $R$ in $\mathbb{M}_\kappa^n$. 

In \cite{BM08}, Bessa and Montenegro, have obtained the same upper and lower bounds for the first eigenvalue of the geodesic ball $B_R(p)$ but {assuming that the} mean curvature of the geodesic spheres of $(M,g)$ centered at $p\in M$ {are bounded by the mean curvature of the geodesic spheres in  $\mathbb{M}_\kappa^n$}. In particular, if the mean curvature pointed inward ${\rm H}_{{\rm S}_t(p)}$ of the geodesic spheres ${\rm S}_t(p)$ of $M$ are {bounded from above  (resp. from below)} by the mean curvature  pointed inward ${\rm H}(t,\kappa)$ of the geodesic sphere with the same radius of $\mathbb{M}_\kappa^n$,  \emph{i.e.},
$${\rm H}_{{\rm S}_t(p)}\leq{\rm H}(t,\kappa)\quad \left(\vphantom{\dfrac{1}{1}}{\rm resp.} \quad{\rm H}_{{\rm S}_t(p)}\geq{\rm H}(t,\kappa)\right),$$
for all point in ${\rm S}_t(p)$ and for all $t\in(0,R)$, then 
\begin{equation}\label{eqpacelli}
	\lambda_{1,g}({\rm B}_R(p))\leq\lambda_1(R,\kappa)\quad \left(\vphantom{\dfrac{1}{1}}{\rm resp.}\quad \lambda_{1,g}({\rm B}_R(p))\geq\lambda_1(R,\kappa)\right).    
\end{equation}
In this case, {like in the case where is assumed  lower bounds for the Ricci or upper bounds for the sectional curvatures} , inequality \eqref{eqpacelli} is sharp. But now, instead of an isometry between geodesic balls, equality is attained in \eqref{eqpacelli} if and only if ${\rm H}_{{\rm S}_t(p)}={\rm H}(t,\kappa)$ for all $t\in(0,R)$.
Observe that {the conclusion of  an}  equality {between} the mean curvatures pointed inward of geodesic spheres is a weaker {than the conclusion of} an isometry between geodesic balls. { Indeed, in example 5.3 of \cite{BGJ19} is shown a $4$-dimensional geodesic ball non-isometric to the geodesic ball of $\mathbb{M}_\kappa^4$, but with ${\rm H}_{{\rm S}_t(p)}={\rm H}(t,\kappa)$ for all $t\in(0,R)$ .}

To bound the sectional or Ricci curvatures, or to  bound  the mean curvature of the geodesic spheres, imply to  control  the behavior of the isoperimetric quotients.  An alternative way  to obtain bounds for the first eigenvalue of the Laplacian makes use of the \emph{isoperimetric inequalities} and the so-called \emph{symmetrizations}.
One of the most classical symmetrizations is the \emph{Schwarz symmetrization} (see \cite{Ba, Po, Mc02, Ch3}). The Schwarz symmetrization of a compact open domain $D\subseteq M$ is the unique geodesic ball ${\rm B}_{R(D)}^\kappa$ of $\mathbb{M}_\kappa^n$ satisfying that ${\rm vol}\left(D\right)={\rm vol}\left({\rm B}_{R(D)}^\kappa\right)$. 

In \cite{Faber, Krahn}, Faber and Krahn showed that if for all open set $D\subseteq M$, consisting in a disjoint union of regular domains, the volume of the perimeter of $D$ is greater than the volume of the perimeter of its  Schwarz symmetrization ${\rm B}_{R(D)}^\kappa$, \emph{i.e.}, if
$$
{\rm vol}\left(\partial D\right)\geq {\rm vol}\left(\partial {\rm B}_{R(D)}^\kappa\right),
$$
then, for all precompact domain $\Omega\subset M$,
\begin{equation}\label{eq:faberkhran}
\lambda_1(\Omega)\geq\lambda_1\left(R(\Omega),\kappa\right).
\end{equation}
{Like in the results of  Cheng}, inequality \eqref{eq:faberkhran} is sharp in the sense that equality is attained if and only if $\Omega$ is isometric to ${\rm B}_{R(\Omega)}^\kappa$.

Instead of bounding the first eigenvalue assuming certain geometric hypothesis on the underlying Riemannian manifold, the first eigenvalue can be computed directly using the so-called Poisson hierarchy (see \cite{Mc02,Mc13,HMP15}). Let $(M,g)$ be a Riemannian manifold, the tower of moments of $\Omega\subset M$ is the family of functions $\displaystyle\{u_k\}_{k=0}^\infty$,  defined inductively as the following sequence of solutions to a hierarchy of boundary value problems in $\Omega\subset M$:

\noindent Let us define
\begin{equation*}
	u_0=1\quad \text{ on }\quad \Omega,
\end{equation*}
and for $k\geq 1$, 
$$
\left\{
	\begin{array}{rcll}
		\Delta_g u_k+k\,u_{k-1}&=0,&\text{ on }&\Omega,\\
		u_k &=0,&\text{ on }&\partial \Omega.
	\end{array}\right.
$$
 Moreover, the \emph{moment spectrum of $\Omega$} is defined as the family of  integrals
$$
\mathcal{A}_k(\Omega)=\int_\Omega u_k\,d{\rm V}_g.
$$
In \cite{McMy}, McDonald and Meyers proved that the moment spectrum of $\Omega$ can be used to compute the first eigenvalue of the Laplacian for the Dirichlet problem on $\Omega$ by
$$
\lambda_{1,g}(\Omega)= \sup \left\{\eta \geq 0 \,:\, \lim_{k \to \infty}
\sup
\left(\frac{\eta}{2}\right)^n\frac{\mathcal{A}_{k}(\Omega)}{\Gamma(n+1)}
<\infty\right\}.
$$
More recently, in \cite{BGJ19}, Bessa, Jorge and the first author have proved that
\begin{equation}\label{eq:lambda1BGJ}
	\lambda_{1,g}(\Omega)=\lim_{k\to\infty}(k+1)\dfrac{\lVert{u}_k\rVert_2}{\lVert{u}_{k+1}\rVert_2},
\end{equation}
where $\lVert\cdot\rVert_2$ denotes the $L_2$-norm on $\Omega$.  In the particular case when the domain is a geodesic ball $B_R^w$ of radius $R$ in a Riemannian model $\mathbb{M}_w^n$, see section \ref{secpropmet} for the definition of a Riemannian model, in \cite{HMP15} is proved that 
\begin{equation}\label{eq:HMP}
\lambda_{1,g_w}(B_R^w)=\lim_{k\to\infty}\frac{ku_{k-1}(0)}{u_k(0)}=\lim_{k\to\infty}\frac{k\mathcal{A}_{k-1}(B_R^w)}{\mathcal{A}_{k}(B_R^w)}.
\end{equation}

In this paper we prove that an upper bound for the first eigenvalue of the Laplacian for the Dirichlet problem of a geodesic ball $B_R(p)$ can be computed using only the \emph{area function of the geodesic spheres},
$${\rm A}_g(t):={\rm vol}_g({\rm S}_t(p)),$$
and the following family of functions $\{T_k\}_{k=0}^\infty$, constructed recursively from the area function, where 
$$
T_0(t)=1
$$
and for $k\geq 1$,
\begin{equation}\label{eq:defTk}
T_k(t)=\int_t^R\frac{\int_0^\sigma T_{k-1}(s){\rm A}_g(s)\,ds}{{\rm A}_g(\sigma)}\,d\sigma.
\end{equation}
Our upper bound for the first eigenvalue of ${\rm B}_R(p)$, obtained using the above family of functions $\{T_k\}$, is stated in the following
\begin{Theorem}\label{MainThm}
	Let $(M,g)$ be a Riemannian manifold, let $p\in M$ be a point of $M$ with injectivity radius ${\rm inj}_{g}(p)$, and let ${\rm B}_R(p)$ be the geodesic ball of radius $R$ centered at $p$. Suppose that $R<{\rm inj}_{g}(p)$, then the first eigenvalue $\lambda_{1,g}({\rm B}_R(p))$ of the Laplacian for the Dirichlet problem on ${\rm B}_R(p)$ is bounded by
	\begin{equation}\label{mainineq}
		\lambda_{1,g}({\rm B}_R(p))\leq \lim_{k\to \infty}\left(\frac{\int_0^RT_k^2(t){\rm A}_g(t)\,dt}{\int_0^RT_{k+1}^2(t){\rm A}_g(t)\,dt}\right)^{1/2}.
	\end{equation}
	Furthermore, equality is attained in \eqref{mainineq} if and only if, for any $t\in(0,R)$, the mean curvature pointed inward ${\rm H}_{{\rm S}_t(p)}$ of the geodesic sphere ${\rm S}_t(p)$ of radius $t$ centered at $p$ is a radial function. Namely, equality in \eqref{mainineq} is attained if and only if there exists a smooth function $h(t)$ such that
	$$
	{\rm H}_{{\rm S}_t(p)}=h(t) \quad {\rm for\; any}\quad 0<t<R.
	$$
\end{Theorem}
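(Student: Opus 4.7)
The plan is to construct a rotationally symmetric (\emph{model}) metric $\tilde g = dt^2 + w(t)^2 g_{S^{n-1}}$ on ${\rm B}_R(p)$, where $w$ is chosen so that the warped ball has the \emph{same area function} as the original, that is, $\omega_{n-1} w(t)^{n-1} = {\rm A}_g(t)$ for every $t \in (0,R)$. The argument then splits into two independent parts: first, compare the first Dirichlet eigenvalue on $({\rm B}_R(p),g)$ with that on the symmetrized model ball $({\rm B}_R^w, \tilde g)$; second, compute the latter eigenvalue in terms of the family $\{T_k\}$ defined by \eqref{eq:defTk} via the limit formula \eqref{eq:lambda1BGJ}.

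For the comparison I would test the variational characterization of $\lambda_{1,g}({\rm B}_R(p))$ against \emph{radial} trial functions $\phi(t)$. Because $|\nabla t|_g = 1$ inside the injectivity radius, any such $\phi$ with $\phi(R)=0$ satisfies
$$\int_{{\rm B}_R(p)} |\nabla \phi|_g^2\, dV_g = \int_0^R \phi'(t)^2 {\rm A}_g(t)\, dt, \qquad \int_{{\rm B}_R(p)} \phi^2\, dV_g = \int_0^R \phi(t)^2 {\rm A}_g(t)\, dt,$$
and the same identities hold on $({\rm B}_R^w, \tilde g)$ since its area function is ${\rm A}_g$ by construction. Since the first Dirichlet eigenfunction on the model is radial by symmetry, the infimum of the Rayleigh quotient restricted to radial test functions coincides with $\lambda_{1,\tilde g}({\rm B}_R^w)$; on $({\rm B}_R(p),g)$, that same infimum is only an upper bound by min--max, yielding $\lambda_{1,g}({\rm B}_R(p)) \le \lambda_{1,\tilde g}({\rm B}_R^w)$.

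For the model computation, radial functions satisfy $\Delta_{\tilde g} f = {\rm A}_g^{-1}({\rm A}_g f')'$. Integrating the Poisson hierarchy $\Delta_{\tilde g} u_k + k\, u_{k-1} = 0$ with $u_k(R) = 0$ and requiring boundedness at $t=0$ gives, by induction on $k$ using \eqref{eq:defTk}, that $u_k = k!\, T_k$. Substituting into \eqref{eq:lambda1BGJ} and using $\|u_k\|_2^2 = \int_0^R u_k(t)^2 {\rm A}_g(t)\, dt$ cancels the factorial ratios and reproduces exactly the limit on the right-hand side of \eqref{mainineq}.

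For the equality case, in normal coordinates one has $\Delta_g \phi(t) = \phi''(t) + \phi'(t)\,\Delta_g t$, and the restriction of $\Delta_g t$ to ${\rm S}_t(p)$ is, up to sign, the mean curvature ${\rm H}_{{\rm S}_t(p)}$. If ${\rm H}_{{\rm S}_t(p)} = h(t)$ is radial, then $\Delta_g$ and $\Delta_{\tilde g}$ agree on radial functions, so the radial first eigenfunction of the model transplants to a genuine first eigenfunction on $({\rm B}_R(p),g)$ and equality is forced. Conversely, if equality holds, the radial minimizer $\phi$ of the Rayleigh quotient must itself be a first eigenfunction of $\Delta_g$; the identity $\phi'(t)\,{\rm H}_{{\rm S}_t(p)} = -\phi''(t) - \lambda\,\phi(t)$ then forces its left-hand side to depend only on $t$, and since such a radial first eigenfunction is strictly monotone with $\phi'(t) \ne 0$ on $(0,R]$, ${\rm H}_{{\rm S}_t(p)}$ must be radial on every ${\rm S}_t(p)$ with $t \in (0,R)$. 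The main obstacle is this converse: one needs the nonvanishing and regularity of $\phi'$ on $(0,R]$, and a careful treatment of the behavior of the radial minimizer near the pole where the distance function ceases to be smooth.
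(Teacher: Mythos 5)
Your proposal follows essentially the same route as the paper: construct the area-preserving rotationally symmetric comparison metric $\widetilde g$, test the (radial) first eigenfunction of the model in the Rayleigh quotient for $g$ via the co-area formula and $|\nabla r|_g=1$, invoke the $\{T_k\}$ limit formula from \cite{BGJ19} for the model ball, and settle the equality case through the identity $\Delta_g \phi = \phi'' + \phi' \Delta_g r$ on radial functions together with the mean-curvature interpretation of $\Delta_g r$. The ``obstacle'' you flag at the end --- that the radial first eigenfunction $f_1$ must have $f_1'(t)<0$ on $(0,R]$ and $f_1'(0)=0$ --- is precisely the content of Proposition~\ref{prop-first-eigenvalue}, which the paper proves by observing from the radial ODE $(n-1)\frac{\omega'}{\omega}f_1'+f_1''=-\lambda_1 f_1$ that every critical point of $f_1$ is a relative maximum, hence $t=0$ is the unique critical point. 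One small point you gloss over in the converse direction is why the radial mean curvature $h(t)$ is automatically equal to $(n-1)\omega_g'(t)/\omega_g(t)$; this follows from integrating $\partial_s \ln\sqrt{\det G(s,\theta)}$ over the sphere and using $A_g'(t)/A_g(t)=(n-1)\omega_g'/\omega_g$, and is needed to conclude that $\Delta_g$ and $\Delta_{\widetilde g}$ agree on radial functions rather than merely producing some rotationally symmetric operator.
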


In this work instead of use the Schwarz symmetrization {(as  Faber and Krahn in \cite{Faber,Krahn})}, we will construct a rotationally symmetric metric tensor $\widetilde{g}$ for {the} geodesic ball ${\rm B}_R(p)$ with respect to the metric tensor $g$, such that the volume of the geodesic spheres of $({\rm B}_R(p),\widetilde{g})$ will coincide with the volume of the geodesic spheres with the same radius of $({\rm B}_R(p),g)$. Namely,
$$
{\rm A}_g(t)={\rm A}_{\widetilde{g}}(t).
$$
This rotationally symmetric metric tensor allows us to find a comparison for the first eigenvalue of the Laplacian for the Dirichlet problem but without using an isoperimetric inequality as hypothesis as in Faber and Krahn Theorem. 

The family of functions \eqref{eq:defTk} are related with the moment functions of the Poisson hierarchy $\{\widetilde{u}_k\}$ associated to $\widetilde{g}$. In fact, in \cite{BGJ19}, is proved that since $\widetilde{g}$ is a rotationally symmetric metric tensor
$$
{\widetilde{u}_k(q)=k! T_k({\rm dist}_{\widetilde g}(p,q))},\quad \Vert \widetilde{u}_k\Vert_2=k!\left(\int_0^RT^2_k(t) A_g(t)\,dt\right)^{1/2}.
$$
Hence by \eqref{eq:lambda1BGJ},
$$
\lambda_{1,\widetilde{g}}({\rm B}_R(p))=\lim_{k\to \infty}\left(\frac{\int_0^RT_k^2(t){\rm A}_g(t)\,dt}{\int_0^RT_{k+1}^2(t){\rm A}_g(t)\,dt}\right)^{{1/2}}.
$$
Then, inequality \eqref{mainineq} can be rewritten as
$$
\lambda_{1,g}({\rm B}_R(p))\leq \lambda_{1,\widetilde{g}}({\rm B}_R(p)).
$$
{Notice that in the statement of the Theorem \ref{MainThm} there are no conditions on the Ricci or sectional curvatures (as in the hypothesis of Cheng in \cite{Cheng2} and \cite{Cheng1}), neither on the mean curvature of the geodesic spheres (as in the hypothesis of Bessa and Montenegro in \cite{BM08}). But the geodesic spheres, in the critical metric tensor where equality in \eqref{mainineq} is attained, have radial mean curvature as in the result of Bessa and Montenegro. } 

{Since $(B_R,\widetilde{g})$ is a Riemannian model  of radius $R$ we can apply \eqref{eq:HMP} and hence
\begin{equation}\begin{aligned}
\lambda_{1,g}({\rm B}_R(p))\leq \lambda_{1,\widetilde{g}}({\rm B}_R(p))=&\lim_{k\to \infty}\left(\frac{\int_0^RT_k^2(t){\rm A}_g(t)\,dt}{\int_0^RT_{k+1}^2(t){\rm A}_g(t)\,dt}\right)^{{1/2}}\\
=&\lim_{k\to\infty}\frac{T_{k-1}(0)}{T_k(0)}=\lim_{k\to \infty}\frac{\int_0^RT_{k-1}(t){\rm A}_g(t)\,dt}{\int_0^RT_{k}(t){\rm A}_g(t)\,dt}\cdot
\end{aligned}
\end{equation}}

We must remark here that in the classical symmetrization results, the symmetrizated domain minimizes the first eigenvalue but in our result the ball with the rotationally symmetric metric tensor maximizes the first eigenvalue. Moreover, in the following example we show that our upper bounds can not be obtained from the classical comparison with the Ricci curvature neither {from} the comparison with the mean curvature.
\begin{example}\label{ex:boundl1noboundcurv}
	Let $(r,\theta)$ be polar coordinates in $\mathbb{R}^2$ around $\vec{0}\in \mathbb{R}^2$. Let us endow $\mathbb{R}^2$ with the following metric
	$$
	g=dr\otimes dr+ \left(r+\varphi(r) \cos(\theta)\right)^2d\theta\otimes d\theta,
	$$
	with
	$$
	\varphi: [0,\infty)\to [0,\infty),\quad t\mapsto \varphi(t):=\left\lbrace
	\begin{array}{lcl}
		0,&{\rm if}& t\leq 2,\\
		e^{-\frac{1}{(t-2)^2}},&{\rm if}& t>2.
	\end{array}
	\right.
	$$
	Hence,
	$$
	{\rm A}_g(t)=\int_0^{2\pi}\left(t+\varphi(t) \cos(\theta)\right)d\theta=2\pi t.
	$$
	By using the Theorem \ref{MainThm} we conclude that, for the geodesic disc of radius $R$,
	\begin{equation}\label{exbound}
		\lambda_{1,g}({\rm B}_R(\vec{0}))\leq\lambda_{1,g_{\rm can}}({\rm B}_R(\vec{0}))=\frac{j_0^2}{R^2}\approx \frac{5,78319}{R^2},
	\end{equation}
	where $g_{\rm can}$ is the canonical metric tensor in $\mathbb{R}^2$, $g_{\rm can}=dr\otimes dr+r^2\,d\theta\otimes d\theta$,  and $j_0$ is the first zero of the Bessel function $J_0$.  
	
	On the other hand, since the Ricci curvature is given by
	$$
	K(g)=\left\{
	\begin{array}{lcl}
		0,&{\rm for}& r\leq 2,\\
		\frac{2\,(3(r-4) r+10) \cos (\theta)}{(r-2)^6 \left(\cos (\theta)+e^{\frac{1}{(r-2)^2}} r\right)},&{\rm for} & r>2,
	\end{array}
	\right.
	$$
	for $r>2$, there are regions where $K(g)<0$. Hence, for a geodesic ball of radius $R>2$, the bound \eqref{exbound} can not be obtained by using the comparison of Cheng with the Ricci curvature  (where it is needed $K(g)\geq 0$).  Moreover, since the mean curvature ${\rm H}(t,\theta)$ pointing inward of the geodesic sphere ${\rm S}_t(\vec{0})$ is given by
	$$
 	{\rm H}(t,\theta)=\left\{
	\begin{array}{lcl}
		1/t,&{\rm for}& t\leq 2,\\
		1/t-\frac{(t-4) ((t-2) t+2) \cos (\theta)}{(t-2)^3 t \left(\cos (\theta)+e^{\frac{1}{(t-2)^2}} t\right)},&{\rm for} & t>2,
	\end{array}
	\right.
	$$
	for $t>2$ there are points in the sphere ${\rm S}_t(\vec{0})$ where ${\rm H}(t,\theta)>1/t$. Hence, for a geodesic ball of radius $R>2$, the bound \eqref{exbound} can not be obtained by using the comparison of Bessa and Montenegro with the mean curvature of the geodesic spheres  (where it is needed ${\rm H}(t,\theta)\leq 1/t$). Notice that, since for $R>2$ the mean curvature pointed inward of the geodesic spheres ${\rm S}_t(\vec{0})$ is not a radial function for any $t\in (2,R)$, then equality in \eqref{exbound} can not be obtained, namely
	\begin{equation}\label{exbound2}
		\lambda_{1,g}({\rm B}_R(\vec{0}))<\lambda_{1,g_{\rm can}}({\rm B}_R(\vec{0})) \quad{\rm for}\quad R>2.
	\end{equation}
	The above inequality allows us to state that in $(\mathbb{R}^2,g)$ there exists a domain $\Omega$ with symmetrizated radius $R(\Omega)>0$ with respect to the Euclidean space $(\mathbb{R}^2,g_{\rm can})$, \emph{i.e.}, with 
	$${\rm vol}_g(\Omega)={\rm vol}_{g_{\rm can}}(B_{R(\Omega)}(\vec{0}))=\pi R^2(\Omega)$$ 
	such that  
	$$
	{\rm vol}_{g}(\partial \Omega)<{\rm vol}_{g_{\rm can}}(\partial B_{R(\Omega)}(\vec{0}))=2\pi R(\Omega).
	$$
	Because otherwise, if for any domain $\Omega$, ${\rm vol}_{g}(\partial \Omega)\geq 2\pi {R(\Omega)}$, then by Faber-Krahn Theorem $\lambda_{1,g}({\rm B}_R(\vec{0}))$ should be greater or equal to $\lambda_{1,g_{\rm can}}({\rm B}_R(\vec{0}))$ but this is a contradiction with inequality \eqref{exbound2}. 
\end{example}

From Theorem \ref{MainThm}  we can compare the first eigenvalue of a geodesic ball of $(M,g)$ with the first eigenvalue of a geodesic ball of a real space form $\mathbb{M}^n_\kappa$ assuming certain behavior of the area function ${\rm A}_g(t)$ as follows
\begin{Theorem}\label{ChengTypeTheo}
	Let $(M,g)$ be a Riemannian manifold, let $p\in M$ be a point of $M$ with injectivity radius ${\rm inj}_{g}(p)$. Let ${\rm A}_{\kappa}(t)$ be {the} area function of the geodesic sphere of radius $t$ in $\mathbb{M}^n_\kappa$. Suppose that $R<{\rm inj}_g(p)$ and that for any $t<R$ the function
	$$
	t\mapsto\frac{{\rm A}_g(t)}{{\rm A}_{\kappa}(t)}
	$$
	is a decreasing function. Then, the first eigenvalue $\lambda_{1,g}({\rm B}_R(p))$ of the Laplacian for the Dirichlet problem on the geodesic ball ${\rm B}_R(p)$ of radius $R$ centered at $p$ is bounded by 
	\begin{equation}\label{cheng-ineq2}
		\lambda_{1,g}({\rm B}_R(p))\leq \lambda_1(R,\kappa),
	\end{equation}
	with equality in \eqref{cheng-ineq2} if and only if, for any $t\in(0,R)$, the mean curvature pointed inward ${\rm H}_{{\rm S}_t(p)}$ of the geodesic sphere ${\rm S}_t(p)$ is equal to the mean curvature pointed inward of the geodesic sphere of radius $t$ in $\mathbb{M}_\kappa^n$, namely
	$$
	{\rm H}_{{\rm S}_t(p)}=(n-1)\frac{S_\kappa'(t)}{S_\kappa(t)}, \quad {\rm for\; all }\quad 0<t< R,
	$$
	where
	$$
	S_\kappa(t):=\left\lbrace
	\begin{array}{rcl}
		\frac{\sin(\sqrt{\kappa}t)}{\sqrt{\kappa}},&{\rm if }&\kappa>0,\\
		t,&{\rm if }&\kappa=0,\\
		\frac{\sinh(\sqrt{-\kappa}t)}{\sqrt{-\kappa}},&{\rm if }&\kappa<0.
	\end{array}
	\right.
	$$
\end{Theorem}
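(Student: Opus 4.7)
The plan is to chain Theorem~\ref{MainThm} with a single integration-by-parts comparison between two rotationally symmetric models. Theorem~\ref{MainThm} applied to $B_R(p)$ already gives $\lambda_{1,g}(B_R(p))\le \lambda_{1,\widetilde{g}}(B_R(p))$, where $\widetilde{g}$ is the rotationally symmetric metric on $B_R$ with $A_{\widetilde{g}}(t)=A_g(t)$, with equality iff $H_{S_t(p)}$ is a radial function $h(t)$. So everything reduces to the model-vs-model comparison $\lambda_{1,\widetilde{g}}(B_R(p))\le \lambda_1(R,\kappa)$, under the assumption that $\rho(t):=A_g(t)/A_\kappa(t)$ is non-increasing.

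To prove this, I would let $u=u(t)$ be the positive radial first Dirichlet eigenfunction of the model ball $(\mathrm{B}_R^\kappa,g_\kappa)$, characterized by $(A_\kappa u')'+\lambda_1(R,\kappa)\,A_\kappa u=0$, $u'(0)=0$, $u(R)=0$, with $u>0$ and $u'\le 0$ on $(0,R)$. Using $u$ as a radial test function for the Rayleigh quotient of $\lambda_{1,\widetilde{g}}$ gives
$$
\lambda_{1,\widetilde{g}}(B_R(p))\le \frac{\int_0^R (u'(t))^2 A_g(t)\,dt}{\int_0^R u(t)^2 A_g(t)\,dt}.
$$
Expanding $(u'A_g)'=\rho\,(A_\kappa u')'+u'\rho'A_\kappa=-\lambda_1(R,\kappa)\,A_g\,u+u'\rho'A_\kappa$ and integrating by parts, the boundary terms vanishing because $u(R)=0$ and $A_g(t)=O(t^{n-1})$ at the origin, one obtains
$$
\int_0^R (u')^2 A_g\,dt = \lambda_1(R,\kappa)\int_0^R u^2 A_g\,dt-\int_0^R u\,u'\,\rho'\,A_\kappa\,dt.
$$
Since $u\ge 0$, $u'\le 0$, $\rho'\le 0$, $A_\kappa\ge 0$, the last integral is non-negative, so the displayed Rayleigh quotient is $\le \lambda_1(R,\kappa)$, which proves \eqref{cheng-ineq2}.

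For the equality case, equality in Theorem~\ref{MainThm} forces $H_{S_t(p)}=h(t)$ radial, while equality in the integration-by-parts step forces $u\,u'\,\rho'\equiv 0$, hence $\rho'\equiv 0$ on $(0,R)$; combined with $\rho(0^+)=1$ (which follows from the local asymptotics $A_g(t),A_\kappa(t)\sim\omega_{n-1}t^{n-1}$), this yields $A_g\equiv A_\kappa$. The first variation formula $A_g'(t)=\int_{S_t(p)} H_{S_t(p)}\,d\sigma = h(t)A_g(t)$ then gives $h(t)=A_g'/A_g=A_\kappa'/A_\kappa=(n-1)S_\kappa'(t)/S_\kappa(t)$, which is exactly the stated equality condition. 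Conversely, if $H_{S_t(p)}=(n-1)S_\kappa'/S_\kappa$ then $H$ is radial and $A_g'/A_g=A_\kappa'/A_\kappa$ with matching initial asymptotics forces $A_g\equiv A_\kappa$, so both inequalities become equalities.

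The only mildly delicate point is the justification of the integration by parts near $t=0$ and at $t=R$; both are routine from the $t^{n-1}$-vanishing of the area functions, the identities $u'(0)=0$ and $u(R)=0$, and the smoothness of $A_g$, $A_\kappa$ on $(0,R)$. The conceptual content of the argument is concentrated in the identity for $\int_0^R(u')^2A_g\,dt$, which converts the hypothesis $\rho'\le 0$ directly into the desired inequality without requiring any monotonicity statement about $u$ or its logarithmic derivative.
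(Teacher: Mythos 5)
Your proof is correct and it takes a genuinely different route from the paper for the model-versus-model step. Both arguments share the same first step (applying Theorem~\ref{MainThm} to reduce to $\lambda_{1,\widetilde{g}}(\mathrm{B}_R(p))\le \lambda_1(R,\kappa)$) and both use the radial first Dirichlet eigenfunction of the comparison space form as a test function, but from there they diverge. The paper passes to the pointwise comparison $(n-1)\omega_g'/\omega_g\le (n-1)W'/W$ (which encodes the same monotonicity of $A_g/A_\kappa$) and then applies Barta's lemma: since $f'_{1,W}<0$, the quotient $-\Delta_{\widetilde g}\phi_{1,W}/\phi_{1,W}$ is bounded above pointwise by $\lambda_{1,g_W}$. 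You instead plug $u$ into the Rayleigh quotient and run a single integration by parts, using $(u'A_g)'=-\lambda_1(R,\kappa)A_gu+u'\rho'A_\kappa$, so that the hypothesis $\rho'\le 0$ shows up as a sign-definite error term $-\int_0^R uu'\rho'A_\kappa\,dt\le 0$. The two mechanisms are of course closely related (Barta is essentially the pointwise shadow of the Rayleigh estimate), but your version is more elementary and self-contained and, notably, it makes the rigidity step transparent: equality forces $\rho'\equiv 0$, hence $A_g\equiv A_\kappa$, which together with the radiality of $\mathrm{H}_{S_t(p)}$ from Theorem~\ref{MainThm} and the first-variation formula $A_g'(t)=\int_{S_t(p)}\mathrm{H}_{S_t(p)}\,d\sigma$ pins down $h(t)=(n-1)S_\kappa'/S_\kappa$. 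The paper states this equality characterization but its proof of Theorem~\ref{ChengTypeTheoGen} does not spell it out, so your treatment is actually more complete there. One minor slip: your closing remark that the argument does not require ``any monotonicity statement about $u$'' overstates the point, since you do rely on $u'\le 0$ (strictly negative on $(0,R]$) both for the sign of the error integral and for the rigidity; what you avoid is any Riccati-type comparison of logarithmic derivatives, which is the accurate claim.
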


Observe that when the Ricci curvatures of $M$ are bounded from below by {the Ricci curvatures of $\mathbb{M}_\kappa^n$, \emph{i.e.},} ${\rm Ric}_g\geq (n-1)\kappa$ (as the hypothesis of Cheng in  \cite{Cheng2}) the function 
$$
t\mapsto\frac{{\rm A}_g(t)}{{\rm A}_{\kappa}(t)}
$$
is a decreasing function for any $t<{\rm inj}_g(p)$ (for more details see \cite{Ch2}). Hence, our hypothesis about the behavior of the above quotient is weaker than the original hypothesis of Cheng but only up to the injectivity radius ${\rm inj}_g(p)$. Moreover, to characterize the equality, Cheng shows that the equality is attained if and only if the ball ${\rm B}_R(p)$ of $M$ is isometric to the ball with the same radius of $\mathbb{M}^n_\kappa$. {B}ut, with our hypothesis, equality is attained if and only if the mean curvature of the geodesic sphere ${\rm S}_t(p)$ is $(n-1)\frac{S_\kappa'(t)}{S_\kappa(t)}$ for all $t<R$.

Theorem \ref{ChengTypeTheo} is a particular case of Theorem \ref{ChengTypeTheoGen} of Section \ref{sec:ChengTypeTheo} obtained by using our Theorem \ref{MainThm}. Indeed, the general case is obtained when
$$
	t\mapsto\frac{{\rm A}_g(t)}{{\rm A}_{W}(t)}
$$
is decreasing where ${\rm A}_{W}(t)$ is the volume of the geodesic sphere of radius $t$ in some Riemannian model.

\subsection*{Acknowledgments}
The authors are grateful to professor Vicente Palmer for his valuable help and for his useful comments and suggestions during the preparation of the present paper.

%
%\subsection*{Outline of the paper}
%
%\subsection{Acknowledgement}
%The authors  wish to thank the referee for his/her useful suggestions.
%
%
%
\section{Volume-based rotational symmetrization of the metric tensor}\label{rotmetcomp}
The main tool of this paper consists in the transformation of the metric tensor $g$ of a Riemannian manifold $(M,g)$ to a metric tensor $\widetilde{g}$ such that $({\rm B}_R(p),\widetilde{g})$ is rotationally symmetric and the volume of the geodesic spheres of $({\rm B}_R(p),\widetilde{g})$ coincides with the volume of geodesic spheres of $({\rm B}_R(p),g)$. In this section we show in Theorem \ref{prop2.1} that this new {metric tensor} $\widetilde{g}$ is smooth, in Proposition \ref{malsagxulo} we compare the behavior of the distance function with respect $g$ and $\widetilde{g}$, as well as the area function for geodesic spheres and  the expression of the Laplacian with respect to the new metric tensor $\widetilde{g}$. Finally, in Corollary \ref{corollarifinal} we characterize the first eigenfunction and first eigenvalue of $({\rm B}_R(p),\widetilde{g})$.

\subsection{Definition and smoothness of the rotationally symmetric metric {tensor} of comparison}
Let $(M,g)$ be a Riemannian manifold, let $p\in M$ be a point in $M$, let ${\rm B}_R(p)$  be the geodesic ball of $M$ centered at $p$ of radius $R<{\rm inj}_g(p)$, and let us denote by
$$
\mathcal{B}_R(\vec{0}):=\left\{v\in T_pM\, :\, g(v,v)<R^2\right\}.
$$
The exponential map $\exp_p: T_pM\to M$ induces a diffeomorphism from  $\mathcal{B}_R(\vec{0})$ to ${\rm B}_R(p)$. Let $\{E^i\}_{i=1}^n$ be an orthonormal basis of $T_pM$ and let us denote {by $x^1,\ldots,x^n$ the \emph{normal coordinate functions} associated to the orthonormal basis  $\{E^i\}_{i=1}^n$ with respect to $g$ given by $x^i(q):=g( E^i,\exp_p^{-1}(q))$. T}hen the map 
\begin{equation}\label{eq:diffzeta}
 	\zeta:{\rm B}_R(p)\to \mathbb{D}_R(\vec{0}),  \quad
	q\mapsto  \zeta(q):=\left(x^1(q),\ldots,x^n(q)\right)\quad 
\end{equation}  
is a diffeomorphism from ${\rm B}_R(p)$ to the open Euclidean ball  $\mathbb{D}_R(\vec{0})$, 
$$
\mathbb{D}_R(\vec{0}):=\left\{x\in \mathbb{R}^n\, :\, (x^1)^2+\cdots +(x^n)^2<R^2\right\}.
$$
Since $\zeta=(x^1,\ldots,x^n):{\rm B}_R(p)\to \mathbb{D}_R(\vec{0})$ is a coordinate system (or chart) in ${\rm B}_R(p)$ then, for any $q\in {\rm B}_R(p)$, the coordinate vectors
$$
\left\{\left.\frac{\partial}{\partial x^1}\right\vert_q,\ldots ,\left.\frac{\partial}{ \partial x^n}\right\vert_q \right\}
$$
form a basis for the tangent space $T_q({\rm B}_R(p))$ (see for instance Theorem 12 of \cite{O'N}). For each $1\leq i\leq n$ the vector field $\frac{\partial }{\partial x^i}$ on ${\rm B}_R(p)$ sending each $q$ to $\left.\frac{\partial}{ \partial x^i}\right\vert_q$ is  called the \emph{$i$th coordinate vector field} and the one-forms 
$$
\left\{dx^1,\ldots,dx^n\right\} \quad{\rm given}\; {\rm by}\quad dx^i\left(\frac{\partial}{\partial x^j}\right)={\delta_{i,j}}
$$  
are called \emph{coordinate one-forms}. Moreover, for any smooth function $f:{\rm B}_R(p)\to\mathbb{R}$, the one-form $df$ can be obtained as
$$
df=\sum_{i=1}^n\frac{\partial f}{\partial x^i}dx^i.
$$
Let us introduce the following functions $r$ and $\pi$ associated $\zeta$ 
\begin{equation}\label{rpifunctions}
	\begin{array}{ll}
		r:{\rm B}_R(p)\to \mathbb{R},& q\mapsto r(q):=\sqrt{(x^1(q))^2+\cdots +(x^n(q))^2},\vphantom{\dfrac{1}{1}}\\
		\pi: {\rm B}_R(p)-\{p\}\to \mathbb{S}^{n-1}_1,& q\mapsto \pi(q):=\dfrac{\zeta(q)}{r(q)}.
	\end{array}
\end{equation}
In the following definition we introduce the rotationally symmetric metric tensor of comparison by using the above functions.
	
\begin{definition}[Rotationally symmetric metric {tensor} of comparison]\label{def:rotsymmodspaofcom}
	Let $(M,g)$ be a $n$-dimensional Riemannian manifold. Let ${\rm B}_R(p)$ be the geodesic ball of radius $R$ centered at $p {\in M}$. Suppose that $R<{\rm inj}_g(p)$.  Let  $\{E^i\}_{i=1}^n$ be an orthonormal basis of $T_pM$ and let $\zeta=(x^1,\ldots,x^n):{\rm B}_R(p)\to \mathbb{D}_R(\vec{0})$  be the normal coordinate functions associated to  $\{E^i\}_{i=1}^n$. The \emph{rotationally symmetric metric {tensor} of comparison} $\widetilde{g}$ associated to $g$ is the metric tensor given by
	\begin{equation}\widetilde{g}=\left\lbrace\begin{array}{lcl}
		dr\otimes dr+(\omega_g^2\circ r)\pi^*g_{\mathbb{S}^{n-1}_1}, & {\rm on} & {\rm B}_R(p)-\{p\},\\
		\displaystyle\sum_{i=1}^ndx^i\otimes dx^i, & {\rm on} & p,
		\end{array}\right.
	\end{equation}
	where $r,\pi$ are given by \eqref{rpifunctions}, $\pi^*g_{\mathbb{S}^{n-1}_1}$ is the pullback by $\pi$ of the canonical metric tensor $g_{\mathbb{S}^{n-1}_1}$ of $\mathbb{S}^{n-1}_1$, \footnote{Recall that the canonical metric tensor $g_{\mathbb{S}^{n-1}_1}$ of $\mathbb{S}^{n-1}_1$ is the metric tensor that inherits $\mathbb{S}^{n-1}_1:=\{x\in\mathbb{R}^n\, :\, \sum_{i=1}^n(x^i)^2=1\}$ when is considered as a submanifold of $\mathbb{R}^n$ with the canonical metric tensor $\sum_{i=1}^ndx^i\otimes dx^i$.} and $\omega_g:[0,R)\to\mathbb{R}_+$ is the positive function given by 
	\begin{equation}\label{eq:relationgeosphvol}
		t\mapsto\omega_g(t):=\left(\frac{{\rm A}_g(t)}{{\rm vol}\left({\mathbb{S}_1^{n-1}}\right)} \right)^{\frac{1}{n-1}}
	\end{equation}
	where $A_g(t)$ is the volume of the sphere $S_t(p)$ of radius $t$ centered at $p$, \emph{i.e.},
$${\rm A}_g(t):={\rm vol}_g({\rm S}_t(p)).$$

\end{definition}
The main result of this section is to show that the metric tensor $\widetilde{g}$ is well defined. Indeed, in the following Theorem we will prove the smoothness of this new metric tensor.
\begin{theorem}\label{prop2.1}
	Let $(M,g)$ be a $n$-dimensional Riemannian manifold. Let ${\rm B}_R(p)$ be the geodesic ball of radius $R$ centered at $p{\in M}$. Suppose that $R<{\rm inj}_g(p)$. Then, the rotationally symmetric metric {tensor} of comparison $\widetilde{g}$ associated to $g$ is smooth in ${\rm B}_R(p)$.
\end{theorem}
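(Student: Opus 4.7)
The strategy is to work in the normal coordinates $(x^1,\ldots,x^n)=\zeta$ introduced above and show that the components of $\widetilde g$ in these coordinates are smooth across the origin. Away from $p$ there is nothing to check: the radial function $r$ and the projection $\pi$ are smooth, ${\rm A}_g$ (and hence $\omega_g=({\rm A}_g/{\rm vol}(\mathbb{S}^{n-1}_1))^{1/(n-1)}$) is smooth on $(0,R)$ because the geodesic sphere ${\rm S}_t(p)$ varies smoothly in $t<{\rm inj}_g(p)$, and $\pi^*g_{\mathbb{S}^{n-1}_1}$ is the pullback of a smooth tensor under a smooth submersion. All the work concerns the origin.

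First I would rewrite $\widetilde g$ in the coordinates $(x^1,\ldots,x^n)$. Using the Euclidean identity $\sum_i dx^i\otimes dx^i=dr\otimes dr+r^2\,\pi^*g_{\mathbb{S}^{n-1}_1}$ on ${\rm B}_R(p)\setminus\{p\}$ to solve for $\pi^*g_{\mathbb{S}^{n-1}_1}$, together with $dr=\sum_i(x^i/r)\,dx^i$, one obtains
$$
\widetilde g_{ij}(x)=\frac{\omega_g(r)^2}{r^2}\,\delta_{ij}+\frac{r^2-\omega_g(r)^2}{r^4}\,x^i x^j.
$$
Since any function of the form $x\mapsto f(|x|^2)$ with $f$ smooth on $[0,R^2)$ is smooth on $\mathbb{D}_R(\vec 0)$, the theorem reduces to showing that both $\omega_g(r)^2/r^2$ and $(r^2-\omega_g(r)^2)/r^4$ are smooth functions of $r^2$ near $r=0$.

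The analytical core is therefore the expansion $\omega_g(t)=t\,G(t^2)$ with $G$ smooth near $0$ and $G(0)=1$. Parametrizing ${\rm S}_t(p)=\exp_p(t\,\mathbb{S}^{n-1}_1)$ by $\xi\in\mathbb{S}^{n-1}_1\subset T_pM$ yields
$$
{\rm A}_g(t)=t^{n-1}\int_{\mathbb{S}^{n-1}_1} D(t,\xi)\,d\xi,\qquad D(t,\xi):=\det\left((d\exp_p)_{t\xi}\big|_{\xi^\perp}\right),
$$
where determinants are taken with respect to orthonormal bases of $(\xi^\perp,g_p)$ and $(T_{\exp_p(t\xi)}{\rm S}_t(p),g)$. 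The function $D$ is smooth on $(-R,R)\times\mathbb{S}^{n-1}_1$, and the trivial identity $D(-t,\xi)=D(t,-\xi)$ (which holds because $\xi^\perp=(-\xi)^\perp$) combined with the measure-preserving change of variable $\xi\mapsto -\xi$ on the sphere shows that $\int_{\mathbb{S}^{n-1}_1} D(t,\xi)\,d\xi$ is an even smooth function of $t\in(-R,R)$. Any smooth even function is a smooth function of $t^2$, so ${\rm A}_g(t)=t^{n-1}F(t^2)$ with $F$ smooth and $F(0)={\rm vol}(\mathbb{S}^{n-1}_1)$; taking the $(n-1)$-st root (smooth where $F>0$) delivers the required $G$. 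Plugging back, $\omega_g(r)^2/r^2=G(r^2)^2$ is clearly a smooth function of $r^2$, and $(r^2-\omega_g(r)^2)/r^4=(1-G(r^2)^2)/r^2$ is also smooth because $G(0)=1$ makes the numerator vanish to order two at $r=0$.

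The main obstacle I expect is making the evenness step watertight: one has to justify that the integrand on the sphere is genuinely smooth in $(t,\xi)$ for $t$ in a two-sided neighborhood of $0$, and then carefully apply the classical fact \emph{smooth plus even $\Rightarrow$ smooth function of the square} to $F$ (not to $\omega_g$ itself, which only becomes an even function after extracting the factor of $t$). Once this is in place, the rest is the algebraic manipulation of the formula for $\widetilde g_{ij}$ displayed above.
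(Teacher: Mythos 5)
Your proof is correct, and it shares the overall scaffolding of the paper's argument: both rewrite $\widetilde g$ in the normal coordinates $(x^1,\ldots,x^n)$ as
\begin{equation*}
\widetilde g_{ij}(x)=\frac{\omega_g(r)^2}{r^2}\,\delta_{ij}+\frac{r^2-\omega_g(r)^2}{r^4}\,x^i x^j,
\end{equation*}
both reduce the problem to factorizing $\omega_g(t)=t\,G(t^2)$ with $G$ smooth and $G(0)=1$, and both invoke Whitney's theorem that a smooth even function of $t$ is a smooth function of $t^2$.

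The genuinely different step is how you establish that ${\rm A}_g(t)/t^{n-1}$ extends to a smooth \emph{even} function of $t$. The paper starts from Gray's Taylor expansion for the area of geodesic spheres, ${\rm A}_g(t)=a_0t^{n-1}+a_2t^{n+1}+a_4t^{n+3}+\cdots$, records which derivatives of ${\rm A}_g$ vanish at $0$, and then applies Taylor's theorem with the integral form of the remainder to produce the smooth factor and check its odd-order derivatives vanish. You bypass Gray entirely: you parametrize ${\rm S}_t(p)=\exp_p(t\,\mathbb{S}^{n-1}_1)$, write ${\rm A}_g(t)=t^{n-1}\int_{\mathbb{S}^{n-1}_1}D(t,\xi)\,d\xi$ with $D(t,\xi)=\det\bigl((d\exp_p)_{t\xi}\big|_{\xi^\perp}\bigr)$, and obtain evenness from the elementary symmetry $D(-t,\xi)=D(t,-\xi)$ together with the measure-preserving change of variable $\xi\mapsto-\xi$. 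This is more self-contained and avoids the external citation; in exchange it requires you to set up the Jacobian parametrization carefully (in particular noting that $D$ is smooth on a two-sided interval in $t$, and fixing orientations so $D>0$), whereas the paper's argument, once Gray is granted, stays in one-variable calculus. Both the division $\omega_g(r)^2/r^2=G(r^2)^2$ and $(r^2-\omega_g(r)^2)/r^4=(1-G(r^2)^2)/r^2$ being smooth functions of $r^2$ (the latter because $G(0)=1$ kills the numerator at $0$) match the paper's conclusion in substance; the paper merely splits $G(t)=1+t\varphi(t)$ one step further to make the same cancellation explicit.
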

\begin{proof}
	First, we are proving that $\omega_g(t)$ can be rewritten as
	\begin{equation*}
		\omega_g(t)=t\left(1+t^2\varphi(t^2)\right)
	\end{equation*} 
	with some positive smooth function $\varphi$. The area function ${\rm A}_g(t)$ is a smooth function up to its injectivy radius (see \cite{Ch2}) and it has Taylor expansion about $t=0$ given by (see Theorem 3.1 of \cite{Gray})
	\begin{equation*}
		{\rm A}_g(t)=a_0t^{n-1}+a_2t^{n+1}+a_4t^{n+3}+\cdots
	\end{equation*}	
	for some constants $a_{2k}\in\mathbb{R}$, $k\in\mathbb{N}$, with $a_0={\rm vol}\left(\mathbb{S}^{n-1}\right)$. In particular, ${\rm A}_g(0)=0$, the derivatives ${\rm A}_g^{(k)}(0)=0$ for $k=1,\dots,n-2$, and the derivatives ${\rm A}_g^{(n+2k)}(0)=0$ for $k\in\mathbb{Z}$. Since every derivative of ${\rm A}_g(t)$ vanishes up to $n-1$ order and since  ${\rm A}_g(t)$ is a smooth function up to $t={\rm inj}_g(p)$, then  we can use for all $t\in [0, {\rm inj}_g(p)]$ the Taylor expansion with integral form of the remainder (see \cite{Spivak} for instance) and we can rewrite ${\rm A}_g(t)$ as
	$$
	{\rm A}_g(t)=\dfrac{1}{(n-2)!}\int_0^t(t-x)^{n-2}{\rm A}_g^{(n-1)}(x)\,dx.
	$$
	By {using } the change of variable   $x=st$ in the above expression, we can express
	$$
	{\rm A}_g(t)=a_0\,t^{n-1}f(t),\quad f(t):=\dfrac{1}{a_0 (n-2)!}\int_0^1(1-s)^{n-2}{\rm A}_g^{(n-1)}(st)\,ds.
	$$
	The function $f(t)$ is a positive smooth function with 
	$$
	f^{(k)}(t)=\frac{1}{a_0 (n-2)!}\int_0^1(1-s)^{n-2}s^k{\rm A}_g^{(n-1+k)}(st)\,ds.
	$$
	In particular, $f(0)=1$ and, since ${\rm A}_g^{(n+2k)}(0)=0$ for $k\in\mathbb{Z}$, the odd order derivatives of $f(t)$ vanish at $0$. In fact,
	$$
	f^{(2k+1)}(0)=\frac{1}{a_0 (n-2)!}\int_0^1(1-s)^{n-2}s^k{\rm A}_g^{(n+2k)}(0)\,ds=0
	$$
	for all $k\in\mathbb{N}$. Then  $f$ can be extended to a smooth even function $\widetilde{f}(t)$ with $\widetilde{f}(0)=1$, given by
	$$
	\widetilde{f}(t):=\left\lbrace
	\begin{array}{lcr}
		f(t ), & {\rm if} & t\geq 0, \\
		f(-t), & {\rm if} & t< 0.
	\end{array}
	\right.
	$$
	Since $\widetilde{f}$ is a smooth even function, it can be expressed as (see \cite{Whitney43})
	$$
	\widetilde{f}(t)=h(t^2)
	$$
	with a positive smooth function $h$. Notice that $h(0)=f(0)=1$. We can therefore express the area function  as
	\begin{equation}\label{eq:expresionareagfunction2}
		{\rm A}_g(t)=a_0\,t^{n-1}h(t^2)={\rm vol}\left(\mathbb{S}^{n-1}\right)t^{n-1}h(t^2).
	\end{equation}
	Now we define the function $F(t):=\left(h(t)\right)^{\frac{1}{n-1}}$. Then, since $h(t)>0$ on $0\leq t<{{\rm inj}_g(p)}$ and $h(0)=1$, then $F$ is smooth with $F(0)=1$. Hence, from \eqref{eq:relationgeosphvol}, we obtain that
	\begin{equation*}
		\omega_g(t)=t\,F(t^2).
	\end{equation*}
	On the other hand, since $F$ is a positive smooth function with $F(0)=1$ then we can express $F$ as
	$$
	F(t)=1+\int_0^tF'(x)\,dx=1+t\int_0^1F'(st)\,ds
	$$
	Thus, we can rewrite $F$ as
	$$
	F(t)=1+t\varphi(t),\quad \varphi(t):=\int_0^1F'(st)\,ds.
	$$	
	This implies that $F(t^2)=1+t^2\varphi(t^2)$ and 
	\begin{equation}\label{eq:warpingexpresio2}
		\omega_g(t)=t\left(1+t^2\varphi(t^2)\right).
	\end{equation}
	Finally, to prove the {smoothness} of the  rotationally symmetric metric {tensor} of comparison observe that since on ${\rm B}_R(p)-\{p\}$
$$
\sum_{i=1}^ndx^i\otimes dx^i=dr\otimes dr+r^2\pi^*g_{\mathbb{S}^{n-1}_1}\quad {\rm and}\quad dr=\sum_{i=1}^n\frac{x^i}{r}dx^i,
$$
the metric tensor of comparison $\widetilde{g}$  can be expressed in normal coordinates with respect to any orthonormal basis $\{E_i\}_{i=1}^n$ as
	\begin{equation}\label{cartesian}
		\begin{array}{lcr}
			 \widetilde{g}&=& \displaystyle\sum_{i,j=1}^n\left(\delta_{i\, j}+\dfrac{\omega_g^2(r)-r^2}{r^4}\left(r^2\,\delta_{i\,j}-x^ix^j\right)\right)\,dx^i\otimes dx^j.
		\end{array}
	\end{equation}
	Then, applying equation \eqref{eq:warpingexpresio2},
	\begin{equation}
		\begin{array}{lcl}
			\widetilde{g}&=& \displaystyle\sum_{i,j=1}^n\left(\delta_{i\, j}+\dfrac{r^2\left(1+r^2\varphi(r^2)\right)^2-r^2}{r^4}\left(r^2\,\delta_{i\,j}-x^ix^j\right)\right)\,dx^i\otimes dx^j\\	&=&\displaystyle\sum_{i,j=1}^n\left(\vphantom{\dfrac{1}{1}}\delta_{i\, j}+\left(2\varphi(r^2)+r^2\varphi^2(r^2)\right)\left(r^2\,\delta_{i\,j}-x^ix^j\right)\right)\,dx^i\otimes dx^j.
		\end{array}
	\end{equation}
	Since $x_i, r^2$ are smooth functions from ${\rm B}_R(p)$ to $\mathbb{R}$ and $\varphi$ is a smooth function of $\mathbb{R}_+$ the Theorem follows.

\end{proof} 
		
Observe from the proof of Theorem that what is proved is that $\omega_g(t)=t\,F(t^2)$ with $F(t)$ being a smooth function satisfying $F(0)=1$. Moreover in the proof of the Theorem is proved that any metric tensor on ${\rm B}_R(p)-\{p\}$  of the form
\begin{equation}\label{polarmetric}
	dr\otimes dr+(\omega^2\circ r)\pi^*g_{\mathbb{S}^{n-1}_1} 
\end{equation}
with positive warping function $\omega(t)$, can be extended to a smooth metric tensor in ${\rm B}_R(p)$ if there exists a smooth function $F(t)$ with $F(0)=1$ such that $\omega(t)=t\,F(t^2)$. This is equivalent to the classical condition on the warping function $\omega$, which is
\begin{equation}\label{eq:warpingfunctionconditionsclassic}
	\omega(0)=0,\quad \omega'(0)=1 \quad\text{and} \quad \omega^{(2k)}(0)=0\quad\text{for all}\quad k\in\mathbb{N}^*,
\end{equation}
where $\omega^{(2k)}$ denotes all the even derivatives of $\omega$. Indeed, assuming $\omega(t)=t\,F(t^2)$  with $F(0)=1$ it is easy to check that $\omega$ satisfies \eqref{eq:warpingfunctionconditionsclassic}. In the other direction, if $\omega$ satisfies condition \eqref{eq:warpingfunctionconditionsclassic} we can construct an even smooth function $f:\mathbb{R}\to\mathbb{R}_+$ such that $\omega(t)=tf(t)$ and then, using \cite{Whitney43} as in the proof of Theorem \ref{prop2.1}, we obtain $f(t)=F(t^2)$.

\subsection{Properties of the rotationally symmetric metric {tensor} of comparison}\label{secpropmet}

In this subsection of the paper we show some properties of the rotationally symmetric metric {tensor} of comparison $\widetilde{g}$. First of all, we will clarify in which sense the metric tensor $\widetilde{g}$ is rotationally symmetric. The Orthogonal group ${\rm O}(n)=\{R\in {\rm GL}(n)\, :\, R^TR=RR^T=\mathfrak{1}_n\}$ acts on $\mathbb{D}_R(\vec{0})$ by 
$$
{\rm O}(n)\times \mathbb{D}_R(\vec{0})\to \mathbb{D}_R(\vec{0}),\quad (R,x)\mapsto R\, x.
$$
By using the diffeomorphism $\zeta : {\rm B}_R(p)\to \mathbb{D}_R(\vec{0})$, defined in \eqref{eq:diffzeta}, we can define the action of ${\rm O}(n)$ on ${\rm B}_R(p)$, by
$$
{\rm O}(n)\times {\rm B}_R(p)\to {\rm B}_R(p),\quad (R,q)\mapsto \zeta^{-1}(R\, \zeta(q)).
$$
When ${\rm B}_R(p)$ is endowed with the metric tensor $\widetilde{g}$, the group ${\rm O}(n)$ acts by isometries. Since $\widetilde{g}$ remains invariant under the action of the Orthogonal group we will say that the metric tensor $\widetilde{g}$ is \emph{rotationally symmetric}.

In \cite{grigoryan-book} is defined an $n$-dimensional Riemannian manifold $(M,g)$ as a \emph{Riemannian model} if the following conditions are satisfied:
\begin{enumerate}
	\item There is a chart of $M$ that covers all $M$, and the image of this chart in $\mathbb{R}^n$ is a ball $\mathbb{D}_{R_0}$ of radius $R_0\in (0,+\infty]$.
	
	\item The metric {tensor} $g$ in the polar coordinates $(r,\theta)$ in the above chart has the form given by \eqref{polarmetric} with $\omega$ a positive function.
\end{enumerate}
The number $R_0$ is called the radius of the model $M$.  Observe that given a Riemannian model $(M,g)$ the metric tensor $g$ is rotationally symmetric in any geodesic ball of radius $R<R_0$. On the other hand, given a Riemannian manifold $(M,g)$, the geodesic ball $({\rm B}_R(p),\widetilde{g})$ of radius $R<{\rm inj}_g(p)$ endowed with the rotationally symmetric metric {tensor} of comparison $\widetilde{g}$ associated to $g$ is a Riemannian model of radius $R$. 

The expression of the distance function, area function, and Laplacian of functions with respect to $g$ and $\widetilde{g}$ are given in the following
\begin{proposition}\label{malsagxulo}
	Let $(M,g)$ be a $n$-dimensional Riemannian manifold. Let ${\rm B}_R(p)$ be the geodesic ball of radius $R<{\rm inj}_g(p)$ centered at $p{\in M}$, let $\exp_p:T_pM\to M$ be the exponential map associated to $g$, let  $\widetilde{g}$ be the rotationally symmetric metric {tensor} of comparison  associated to $g$.  Then for any $q\in {\rm B}_R(p)$,
	\begin{enumerate}
		\item $\displaystyle\nabla r(q)=\widetilde{\nabla}r(q)=\partial r(q)=\sum_{i=1}^n\frac{x^i(q)}{r(q)}\left.\frac{\partial}{\partial x^i}\right\vert_q$,
		\bigskip
		
		\item $r(q):=\Vert \exp^{-1}(q)\Vert={\rm dist}_g(p,q)={\rm dist}_{\widetilde{g}}(p,q)$,
		\bigskip
		
		\item $g( \nabla r(q),\nabla r(q))=\widetilde{g}(\widetilde{\nabla}r(q),\widetilde{\nabla}r(q))=1$,
		\bigskip
		
		\item $ {\rm A}_{\widetilde{g}}(t)={\rm vol}_{\widetilde{g}}(S_t(p))={\rm vol}\left(\mathbb{S}^{n-1}_1\right)\omega_g^{n-1}(t)={\rm A}_g(t)$,
        \bigskip
        
		\item For any smooth function $f:{\rm B}_R(p)\to \mathbb{R}$,
		\begin{equation}\label{eq:laprotsym}
		    \Delta_{\widetilde g} f=(n-1)\dfrac{\omega'_g(r)}{\omega_g(r)}\dfrac{\partial f}{\partial r}+\dfrac{\partial^2f}{\partial r^2}+\dfrac{1}{\omega^2_g(r)}\Delta_{\mathbb{S}_1^{n-1}}(f\circ\pi^{-1})\circ \pi,
		\end{equation}
	\end{enumerate}
	where $\nabla$ and $\widetilde{\nabla}$ denote the gradient with respect to $g$ and $\widetilde{g}$ respectively and $\Delta_{\mathbb{S}_1^{n-1}}$ denotes the Laplacian of the $(n-1)$-dimensional usual unit sphere.
\end{proposition}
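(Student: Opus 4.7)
The plan is to prove (1) first, from which (3) and (4) follow at once; then (2) follows from the relation between $r$ and the exponential map; finally (5) is the standard warped-product Laplacian calculation. For (1), observe from $r=\sqrt{\sum_i(x^i)^2}$ that $\partial r(q)=\sum_i(x^i(q)/r(q))\,\partial/\partial x^i|_q$. To identify this with $\nabla r$, I would invoke the Gauss lemma, which in normal coordinates asserts $g_{ij}(x)\,x^j=x^i$ (equivalently $g^{ij}(x)\,x^j=x^i$); together with $\partial_j r=x^j/r$ this gives
\[
\nabla r=\sum_{i,j}g^{ij}(\partial_j r)\frac{\partial}{\partial x^i}=\sum_i\frac{x^i}{r}\frac{\partial}{\partial x^i}.
\]
For $\widetilde{\nabla}r$, I would work on ${\rm B}_R(p)\setminus\{p\}$ using the polar form of $\widetilde{g}$: the block decomposition $dr\otimes dr+(\omega_g^2\circ r)\pi^*g_{\mathbb{S}^{n-1}_1}$ makes $\partial/\partial r$ a unit vector orthogonal to the angular directions, so $\widetilde{\nabla}r=\partial/\partial r$. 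Item (3) is then immediate: $g(\partial r,\partial r)=\sum_{i,j}g_{ij}(x^i/r)(x^j/r)=1$ by Gauss, and $\widetilde{g}(\partial r,\partial r)=1$ from the polar form. Item (4) follows because the induced metric on the level set $\{r=t\}$ from $\widetilde{g}$ is $\omega_g^2(t)\,\pi^*g_{\mathbb{S}^{n-1}_1}$, so ${\rm vol}_{\widetilde{g}}({\rm S}_t(p))=\omega_g^{n-1}(t)\,{\rm vol}(\mathbb{S}^{n-1}_1)$, which equals ${\rm A}_g(t)$ by the defining relation \eqref{eq:relationgeosphvol}.

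For (2), in normal coordinates $\exp_p^{-1}(q)=\sum_i x^i(q)\,E^i$ and $g_p$ is Euclidean in the basis $\{E^i\}$, giving $\Vert\exp_p^{-1}(q)\Vert=r(q)$. Since $R<{\rm inj}_g(p)$, the unique minimizing $g$-geodesic from $p$ to $q$ has length $r(q)$, so ${\rm dist}_g(p,q)=r(q)$. For ${\rm dist}_{\widetilde{g}}(p,q)$, the radial curve from $p$ to $q$ has $\widetilde{g}$-length $r(q)$ by the polar form; any competing piecewise smooth path written as $\gamma(s)=(\rho(s),\sigma(s))$ in polar coordinates has $\widetilde{g}$-length at least $\int|\dot\rho|\,ds\geq r(q)$, so ${\rm dist}_{\widetilde{g}}(p,q)=r(q)$ as well.

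Item (5) is the standard Laplacian formula for a warped-product metric. Using local coordinates $(r,\theta^1,\ldots,\theta^{n-1})$ with $\{\theta^i\}$ local coordinates on $\mathbb{S}^{n-1}_1$, one has $\det\widetilde{g}=\omega_g^{2(n-1)}(r)\,\det g_{\mathbb{S}^{n-1}_1}$, and $\widetilde{g}$ is block-diagonal in $(r,\theta)$. Substituting into $\Delta_{\widetilde{g}}f=\frac{1}{\sqrt{\det\widetilde{g}}}\partial_\alpha(\sqrt{\det\widetilde{g}}\,\widetilde{g}^{\alpha\beta}\partial_\beta f)$ separates the radial and angular contributions and yields
\[
\Delta_{\widetilde{g}}f=\frac{1}{\omega_g^{n-1}(r)}\frac{\partial}{\partial r}\!\left(\omega_g^{n-1}(r)\,\frac{\partial f}{\partial r}\right)+\frac{1}{\omega_g^2(r)}\Delta_{\mathbb{S}^{n-1}_1}(f\circ\pi^{-1})\circ\pi,
\]
and expanding the radial derivative gives \eqref{eq:laprotsym}.

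The only real subtlety is the invocation of the Gauss lemma in (1) to bridge the intrinsic Riemannian structure of $g$ with the Euclidean-style polar description that governs $\widetilde{g}$; once (1) is in hand, the remaining items reduce to direct calculation in the polar form of $\widetilde{g}$ and require no further input from $g$ beyond the defining identity for $\omega_g$.
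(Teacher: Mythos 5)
Your proof is correct, and it supplies exactly the standard argument that the paper leaves implicit: the paper states Proposition \ref{malsagxulo} without proof, treating items (1)--(5) as routine consequences of the Gauss lemma, the warped-product form of $\widetilde{g}$, and the defining relation \eqref{eq:relationgeosphvol}. Your route via the Gauss lemma in normal coordinates for (1), the Lipschitz-type length estimate $\int|\dot\rho|\,ds\geq r(q)$ for (2), and the block-diagonal determinant for (4)--(5) is the expected derivation and contains no gaps.
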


Since $({\rm B}_R(p),\widetilde{g})$ is a Riemannian model, before to prove the Theorem \ref{MainThm} we will need the following consideration about  first eigenfunction and the first eigenvalue for the Dirichlet problem in a geodesic ball of a Riemannian model. 

\begin{proposition}\label{prop-first-eigenvalue}
	Let $(M,g_\omega)$ be a $n$-dimensional Riemannian model, let ${\rm B}_R(p)$ be a geodesic ball centered at $p\in M$ with radius $R<{\rm inj}_g(p)$. Suppose that the smooth metric tensor  on ${\rm B}_R(p)-\{p\}$ is given by
	\begin{equation*}
		g_\omega=dr\otimes dr+(\omega^2\circ r)\pi^*g_{\mathbb{S}^{n-1}_1}
	\end{equation*}
	with $\omega:[0,R)\to\mathbb{R}_+$ a positive function. Then, any positive first eigenfunction $\phi_1$ of the Laplacian $\Delta_{g_\omega}$ for the Dirichlet problem on ${\rm B}_R(p)$  is radial, $\phi_1(q)=f_1(r(q))$ with $f_1$ a smooth function  such that
	\begin{equation*}
		f_1'(0)=0\; {\rm and }\; f_1'(t)<0\; {\rm for }\; t\in(0,R].
	\end{equation*}
\end{proposition}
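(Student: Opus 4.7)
The plan is to combine three ingredients: (i) standard spectral-theoretic facts about the Dirichlet first eigenvalue, (ii) the ${\rm O}(n)$-symmetry of $({\rm B}_R(p),g_\omega)$ recorded earlier in the section, and (iii) a direct analysis of the resulting radial ODE using the formula \eqref{eq:laprotsym} for $\Delta_{g_\omega}$.

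First, by standard elliptic theory on the precompact domain ${\rm B}_R(p)$, the first Dirichlet eigenvalue $\lambda_{1,g_\omega}({\rm B}_R(p))$ is strictly positive and simple, and any minimizer of the associated Rayleigh quotient can be taken positive in the interior (replace $\phi_1$ by $|\phi_1|$ and apply the strong maximum principle / Harnack inequality). So we may assume $\phi_1>0$ on ${\rm B}_R(p)$. Next, for each $A\in{\rm O}(n)$ the map $T_A:q\mapsto \zeta^{-1}(A\,\zeta(q))$ is an isometry of $({\rm B}_R(p),g_\omega)$ fixing $p$, hence commutes with $\Delta_{g_\omega}$ and preserves the Dirichlet boundary condition. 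Therefore $\phi_1\circ T_A$ is again a positive first eigenfunction, and by simplicity $\phi_1\circ T_A=c_A\phi_1$ for some $c_A>0$; normalizing in $L^2$ forces $c_A=1$. Thus $\phi_1$ is ${\rm O}(n)$-invariant, i.e.\ radial: $\phi_1(q)=f_1(r(q))$ for some smooth $f_1:[0,R]\to\mathbb{R}$ with $f_1(R)=0$.

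Plugging this into \eqref{eq:laprotsym} (the angular Laplacian annihilates $f_1\circ r$), the eigenvalue equation becomes the ODE
$$
f_1''(r)+(n-1)\frac{\omega'(r)}{\omega(r)}f_1'(r)+\lambda_{1,g_\omega}({\rm B}_R(p))\,f_1(r)=0,\qquad r\in(0,R),
$$
with $f_1(R)=0$, or equivalently, in conservation form, $(\omega^{n-1}f_1')'=-\lambda_{1,g_\omega}({\rm B}_R(p))\,\omega^{n-1}f_1$. To obtain $f_1'(0)=0$ I would use the smoothness of $\phi_1$ at $p$: in normal coordinates one can write $\phi_1(q)=\phi_1(p)+\sum_i a_i x^i(q)+O(r^2)$, and ${\rm O}(n)$-invariance forces the linear part to vanish (the only ${\rm O}(n)$-invariant linear form is zero), so $f_1(r)=f_1(0)+O(r^2)$ and $f_1'(0)=0$.

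Finally, integrating the conservation form from $0$ to $r$ — the boundary term at $0$ vanishes because $\omega^{n-1}(s)\sim s^{n-1}$ and $f_1'$ is bounded with $f_1'(0)=0$ — yields
$$
\omega^{n-1}(r)\,f_1'(r)=-\lambda_{1,g_\omega}({\rm B}_R(p))\int_0^r\omega^{n-1}(s)f_1(s)\,ds.
$$
Since $\omega>0$ on $(0,R]$, $\lambda_{1,g_\omega}({\rm B}_R(p))>0$, and $f_1>0$ on $[0,R)$, the right-hand side is strictly negative for every $r\in(0,R]$, so $f_1'(r)<0$ on $(0,R]$, which is the remaining claim. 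The only delicate point is the simplicity/positivity input at the start: it is precisely what converts the symmetry $\phi_1\circ T_A\in\ker(\Delta_{g_\omega}+\lambda_{1,g_\omega})$ into the rigidity $\phi_1\circ T_A=\phi_1$; once that is in hand, the rest is a routine ODE computation.
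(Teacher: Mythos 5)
Your proof is correct, and for the key monotonicity step it takes a genuinely different route than the paper. The paper argues qualitatively from the pointwise ODE: at any interior critical point $t_0$ of $f_1$ the equation forces $f_1''(t_0)=-\lambda_1 f_1(t_0)<0$, so every critical point is a strict local maximum; since two local maxima of a smooth one-variable function are separated by a local minimum, $t=0$ must be the only critical point, and hence $f_1'<0$ on $(0,R)$. You instead write the equation in Sturm--Liouville form $(\omega^{n-1}f_1')'=-\lambda_{1,g_\omega}\,\omega^{n-1}f_1$ and integrate from $0$ to $r$, obtaining the identity $\omega^{n-1}(r)f_1'(r)=-\lambda_{1,g_\omega}\int_0^r\omega^{n-1}(s)f_1(s)\,ds$, whose right-hand side is manifestly negative on $(0,R]$. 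Your version is more quantitative (it gives a formula, not just a sign) and sidesteps the delicate endpoint $r=R$ where $f_1$ vanishes and the paper's local-maximum reasoning does not directly apply. On the remaining points --- radiality of $\phi_1$ and $f_1'(0)=0$ --- you supply the standard simplicity-plus-${\rm O}(n)$-isometry argument and the regularity-at-the-pole argument, whereas the paper asserts radiality as a consequence of rotational symmetry and cites Chavel for $f_1'(0)=0$; your fleshing out of those details is a welcome strengthening rather than a departure. Both proofs are sound; yours is arguably the cleaner one for the monotonicity conclusion.
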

\begin{proof}
	If $\phi_1$ is any positive first eigenfunction of the Laplacian $\Delta_{g_\omega}$ for the Dirichlet problem on a rotationally symmetric geodesic ball ${\rm B}_R(p)$ {then}, since $g_w$ is rotationally symmetric {,} $\phi_1$ is a radial function,  {\emph{i.e.}}, we can rewrite, for all $q\in{\rm B}_R(p)$, the first eigenfunction as $\phi_1(q)=f_1(r(q))$ where $f_1$ is a positive real valued smooth function. Moreover, an easy computation leads to
    $$
    \Delta_{g_w}\phi_1(q)=(n-1)\dfrac{\omega'(r(q))}{\omega(r(q))}f_1'(r(q))+f_1''(r(q))=-\lambda_1\left({\rm B}_R(p)\right)f_1(r(q)),
    $$
    where $\lambda_1\left({\rm B}_R(p)\right)>0$ is the first eigenvalue of the Laplacian for the Dirichlet problem in ${\rm B}_R(p)$. Hence, for any $t\in [0,R]$,
    \begin{equation}\label{eq:laprotsym2}
    	(n-1)\dfrac{\omega'(t)}{\omega(t)}f_1'(t)+f_1''(t)=-\lambda_1\left({\rm B}_R(p)\right)f_1(t)
    \end{equation}
    By using that $\phi_1$ is a radial function  it is know that $f_1'(0)=0$ (see \cite{Ch1} for instance).  { On the other hand, from \eqref{eq:laprotsym2}	 we know that  all the critical points of $f_1$ are relative maximums. Since  for a real valued smooth function between two relative maximums there is at least one relative minimum, therefore $f_1$ can only have one maximum in $[0,R)$. Thus,  $0$ is the only critical point of $f_1$ and then, $f_1$ is a decreasing function in $(0,R)$. Namely, $f_1'(t)<0$ for all $t\in(0,R)$.}\end{proof}
	
\begin{theorem}[\cite{BGJ19}]\label{BGJthm} 
	Let $(M,g_\omega)$ be a $n$-dimensional Riemannian model, let ${\rm B}_R(p)$ be a geodesic ball centered at $p$ with radius $R<{\rm inj}_{g_\omega}(p)$. Suppose that the smooth metric tensor  on ${\rm B}_R(p)-\{p\}$ is given by
	\begin{equation}
		g_\omega=dr\otimes dr+(\omega^2\circ r)\pi^*g_{\mathbb{S}^{n-1}_1}
	\end{equation}
	with $\omega:[0,R)\to\mathbb{R}_+$ a positive function. Then, the first eigenvalue $\lambda_{1,g_\omega}({\rm B}_R(p))$ of the Laplacian $\Delta_{g_\omega}$ for the Dirichlet problem on ${\rm B}_R(p)$ is given by
	$$
	\lambda_{1,g_\omega}({\rm B}_R(p))= \lim_{k\to \infty}\frac{\left\lVert T_k(t)\right\lVert_{{2}}}{\left\lVert T_{k+1}(t)\right\lVert_{{2}}}
	$$
	where $\lVert\cdot\lVert_{{2}}$ denotes the $L_2$-norm on ${\rm B}_R(p)$ and with 
	$$
	\left\{
	\begin{array}{rcl}
		T_0(t)&=&1,\\
		T_k(t)&=&\int_t^R\frac{\int_0^\sigma T_{k-1}(s)\,\omega^{n-1}(s)\,ds}{\omega^{n-1}(\sigma)}\,d\sigma.
	\end{array}
	\right.
	$$
\end{theorem}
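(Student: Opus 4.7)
The plan is to reduce the claim to the general limit formula \eqref{eq:lambda1BGJ}, which states
$$\lambda_{1,g_\omega}({\rm B}_R(p))=\lim_{k\to\infty}(k+1)\frac{\lVert u_k\rVert_2}{\lVert u_{k+1}\rVert_2}$$
for the Poisson tower $\{u_k\}$ on ${\rm B}_R(p)$ with $u_0=1$, $\Delta_{g_\omega} u_k+k\,u_{k-1}=0$ in ${\rm B}_R(p)$ and $u_k=0$ on $\partial {\rm B}_R(p)$. The crux is therefore to show that, under rotational symmetry, the $k$-th Poisson moment is the radial function $u_k=k!\,T_k\circ r$.

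Radiality of each $u_k$ would follow from uniqueness of the Dirichlet problem: for any $\rho\in O(n)$ the composition $u_k\circ\rho$ solves the same Poisson problem as $u_k$ (since $g_\omega$ is $O(n)$-invariant), hence $u_k\circ\rho=u_k$. So $u_k(q)=f_k(r(q))$ for some $f_k:[0,R]\to\mathbb{R}$. Plugging this ansatz into formula \eqref{eq:laprotsym} kills the spherical piece and collapses the Laplacian to the Sturm--Liouville form
$$\Delta_{g_\omega}(f_k\circ r)=\frac{1}{\omega^{n-1}(r)}\bigl(\omega^{n-1}(r)\,f_k'(r)\bigr)'.$$

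The Poisson hierarchy then becomes the ODE chain $(\omega^{n-1} f_k')'=-k\,\omega^{n-1} f_{k-1}$ with boundary condition $f_k(R)=0$ and the pole regularity condition $\lim_{r\to 0^+}\omega^{n-1}(r)f_k'(r)=0$. The latter is the delicate step: one uses $\omega(0)=0$ from Theorem \ref{prop2.1} together with the requirement that $u_k$ extend smoothly to $p$ to force the first integration constant to vanish. Integrating twice yields
$$f_k(r)=k\int_r^R\frac{\int_0^\sigma \omega^{n-1}(s)\,f_{k-1}(s)\,ds}{\omega^{n-1}(\sigma)}\,d\sigma,$$
and, starting from $f_0=1=T_0$, an immediate induction against the recursion defining $T_k$ gives $f_k(r)=k!\,T_k(r)$.

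The endgame is then bookkeeping: using the coarea formula together with ${\rm A}_{g_\omega}(t)={\rm vol}(\mathbb{S}^{n-1}_1)\,\omega^{n-1}(t)$ and $u_k=k!\,T_k\circ r$, one obtains $\lVert u_k\rVert_2=k!\,\lVert T_k\rVert_2$, where $\lVert T_k\rVert_2$ stands for the $L_2$-norm on ${\rm B}_R(p)$ of the radial function $T_k\circ r$. Therefore
$$(k+1)\frac{\lVert u_k\rVert_2}{\lVert u_{k+1}\rVert_2}=\frac{(k+1)\,k!}{(k+1)!}\cdot\frac{\lVert T_k\rVert_2}{\lVert T_{k+1}\rVert_2}=\frac{\lVert T_k\rVert_2}{\lVert T_{k+1}\rVert_2},$$
and the claim follows from \eqref{eq:lambda1BGJ}. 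I expect the main obstacle to be the pole regularity argument that kills the first integration constant; once that is in place, the rest is a direct computation.
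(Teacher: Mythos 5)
Your proof is correct, and it takes essentially the route the paper itself indicates: Theorem \ref{BGJthm} is cited from \cite{BGJ19} rather than proved in the paper, but the introduction (around \eqref{eq:lambda1BGJ} and the displayed identities $\widetilde u_k = k!\,T_k\circ r$ and $\Vert\widetilde u_k\Vert_2 = k!\left(\int_0^R T_k^2\,{\rm A}_g\,dt\right)^{1/2}$) sketches exactly the reduction you carry out, namely radiality of the Poisson tower by $O(n)$-invariance and uniqueness, the Sturm--Liouville reduction of the radial Laplacian, the vanishing of the integration constant by pole regularity, and the coarea bookkeeping. Your pole-regularity step is sound: a nonzero constant in $\omega^{n-1}f_k' = C - \int_0^r \omega^{n-1}f_{k-1}$ would force $f_k$ to diverge at $r=0$ (logarithmically for $n=2$, polynomially for $n\ge 3$), contradicting smoothness of $u_k$ at $p$.
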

	
Since $({\rm B}_R(p),\widetilde{g})$ is a particular case of Riemannian model with $\omega=\omega_g${,} from Proposition \ref{prop-first-eigenvalue} and Theorem \ref{BGJthm}{,} we can state the following
\begin{corollary}\label{corollarifinal}
    Let $(M,g)$ be a $n$-dimensional Riemannian manifold. Let ${\rm B}_R(p)$ be the geodesic ball of radius $R<{\rm inj}_g(p)$ centered at $p\in M$, let  $\widetilde{g}$ be the rotationally symmetric metric {tensor} of comparison associated to $g$. Then, any positive first eigenfunction $\phi_1$ of the Laplacian $\Delta_{\widetilde{g}}$ for the Dirichlet problem on ${\rm B}_R(p)$ is radial, $\phi_1(q)=f_1(r(q))$ with $f_1$ a smooth function such that
	\begin{equation}\label{eq:conditionsradialf1}
		f_1'(0)=0\; {\rm and }\; f_1'(t)<0\; {\rm for}\; t\in(0,R]
	\end{equation}
	and moreover, the first eigenvalue is given by
	$$
	\lambda_{1,\widetilde{g}}({\rm B}_R(p))=\lim_{k\to \infty}\left(\frac{\int_0^RT_k^2(t){\rm A}_g(t)\,dt}{\int_0^RT_{k+1}^2(t){\rm A}_g(t)\,dt}\right)^{{1/2}}
	$$
	with
	$$
	\left\{
	\begin{array}{rcl}
		T_0(t)&=&1,\\
		T_k(t)&=&\int_t^R\frac{\int_0^\sigma {T_{k-1}(s){\rm A}_g(s)}\,ds}{{\rm A}_g(\sigma)}\,d\sigma.
	\end{array}
	\right.
	$$
\end{corollary}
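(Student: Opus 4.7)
The statement is a direct specialization of the two preceding results, Proposition \ref{prop-first-eigenvalue} and Theorem \ref{BGJthm}, to the Riemannian model $({\rm B}_R(p),\widetilde{g})$. The plan is therefore to verify that their hypotheses are met in this setting and then to rewrite the conclusions in terms of the area function ${\rm A}_g$ rather than the warping function $\omega_g$.

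First, I would check that $({\rm B}_R(p),\widetilde{g})$ is indeed a Riemannian model of radius at least $R$ in the sense introduced in Section \ref{secpropmet}. By Theorem \ref{prop2.1}, $\widetilde{g}$ is smooth on ${\rm B}_R(p)$, and by its very definition it has the form $dr\otimes dr+(\omega_g^2\circ r)\,\pi^*g_{\mathbb{S}^{n-1}_1}$ outside $p$, with the positive warping function $\omega_g$. Moreover, Proposition \ref{malsagxulo}(2) gives ${\rm dist}_g(p,q)={\rm dist}_{\widetilde{g}}(p,q)=r(q)$, so $R<{\rm inj}_g(p)$ forces $R\le {\rm inj}_{\widetilde{g}}(p)$, which is exactly the hypothesis needed to apply both prior results.

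With this, Proposition \ref{prop-first-eigenvalue} applied to $({\rm B}_R(p),\widetilde{g})$ yields immediately the radial form $\phi_1(q)=f_1(r(q))$ together with $f_1'(0)=0$ and $f_1'(t)<0$ on $(0,R]$, giving \eqref{eq:conditionsradialf1}. Applying Theorem \ref{BGJthm} with $\omega=\omega_g$ then produces the limit formula $\lambda_{1,\widetilde{g}}({\rm B}_R(p))=\lim_{k\to\infty}\lVert T_k\rVert_2/\lVert T_{k+1}\rVert_2$ in which the $T_k$'s are built using $\omega_g^{n-1}$. The remaining step is a cosmetic translation using the identity
$$
\omega_g^{n-1}(t)=\frac{{\rm A}_g(t)}{{\rm vol}(\mathbb{S}^{n-1}_1)}
$$
from \eqref{eq:relationgeosphvol}: the multiplicative constant ${\rm vol}(\mathbb{S}^{n-1}_1)$ appears symmetrically in the numerator and denominator of the recursive definition of $T_k$, so it cancels and the $T_k$ defined with $\omega_g^{n-1}$ coincide with those defined using ${\rm A}_g$ as in \eqref{eq:defTk}. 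Similarly, since $T_k$ is radial, integrating in geodesic polar coordinates gives
$$
\lVert T_k\rVert_2^2=\int_0^R T_k^2(t)\,{\rm A}_{\widetilde g}(t)\,dt=\int_0^R T_k^2(t)\,{\rm A}_g(t)\,dt,
$$
where in the last equality I use Proposition \ref{malsagxulo}(4). Substituting these two identities in the formula from Theorem \ref{BGJthm} produces the claimed quotient expression for $\lambda_{1,\widetilde{g}}({\rm B}_R(p))$.

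There is no genuine obstacle beyond this bookkeeping: all the hard analytic work (smoothness of $\widetilde{g}$ at $p$, the $L_2$-convergence argument of \cite{BGJ19}, and the radial structure of eigenfunctions on rotationally symmetric balls) is absorbed into the invoked results. The only point that requires some care is to observe that the constant ${\rm vol}(\mathbb{S}^{n-1}_1)$ cancels consistently, both in the recursion for $T_k$ and in passing from the $\omega_g^{n-1}$-weighted integrals of Theorem \ref{BGJthm} to the ${\rm A}_g$-weighted integrals of the Corollary.
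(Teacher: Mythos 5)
Your proposal is correct and follows exactly the route the paper takes: it notes that $({\rm B}_R(p),\widetilde g)$ is a Riemannian model with warping function $\omega_g$, invokes Proposition \ref{prop-first-eigenvalue} and Theorem \ref{BGJthm}, and translates $\omega_g^{n-1}$-weighted quantities into ${\rm A}_g$-weighted ones via \eqref{eq:relationgeosphvol} and Proposition \ref{malsagxulo}(4). You spell out the cancellation of the constant ${\rm vol}(\mathbb{S}^{n-1}_1)$ and the co-area computation of $\lVert T_k\rVert_2$ slightly more explicitly than the paper, which simply states the corollary without a separate proof.
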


\section{Proof of the Theorem \ref{MainThm}}\label{comparsec}

In this section we will prove Theorem \ref{MainThm}. Indeed, in this section we will prove Theorem \ref{th:cotaprimervalorpropiperdalt} which is nothing else than Theorem \ref{MainThm} but showing in \eqref{Eq:Comparison} the comparison of the first eigenvalues $\lambda_{1,g}\left({\rm B}_R(p)\right)$ and $\lambda_{1,\widetilde{g}}\left({\rm B}_R(p)\right)$ and the equality 
$$
\lambda_{1,\widetilde{g}}\left({\rm B}_R(p)\right)=\lim_{k\to \infty}\left(\frac{\int_0^RT_k^2(t){\rm A}_g(t)\,dt}{\int_0^RT_{k+1}^2(t){\rm A}_g(t)\,dt}\right)^{{1/2}},
$$
given in Corollary \ref{corollarifinal}.
\begin{theorem}\label{th:cotaprimervalorpropiperdalt}
	Let $(M,g)$ be a Riemannian manifold, let $p\in M$ be a point of $M$ with injectivity radius ${\rm inj}_g(p)$, and let ${\rm B}_R(p)$ be the geodesic ball of radius $R$ centered at $p$. Suppose $R<{\rm inj}_g(p)$, then the first eigenvalue $\lambda_{1,g}\left({\rm B}_R(p)\right)$ of the Laplacian for the Dirichlet problem on ${\rm B}_R(p)$ is bounded by
	\begin{equation}\label{Eq:Comparison}
		\lambda_{1,g}\left({\rm B}_R(p)\right)\leq\lambda_{1,\widetilde{g}}\left({\rm B}_R(p)\right)=\lim_{k\to \infty}\left(\frac{\int_0^RT_k^2(t){\rm A}_g(t)\,dt}{\int_0^RT_{k+1}^2(t){\rm A}_g(t)\,dt}\right)^{{1/2}}{,}
	\end{equation}
	where $\widetilde{g}$ is the rotationally symmetric metric {tensor} of comparison associated to $g$. Futhermore, equality is attained in \eqref{Eq:Comparison} if and only if, for any $t\in (0,R)$, the mean curvature pointed inward ${\rm H}_{{\rm S}_t(p)}$ of the geodesic sphere ${\rm S}_t(p)$ of radius $t$ centered at $p$ is a radial function. Namely, equality in \eqref{Eq:Comparison} is attained if and only if there exists a smooth function $h(t)$ such that
	$$
	{\rm H}_{{\rm S}_t(p)}=h(t)  \quad {\rm for\;{any} }\quad 0<t<R.
	$$
\end{theorem}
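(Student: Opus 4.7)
The plan is to prove the inequality $\lambda_{1,g}(B_R(p))\leq \lambda_{1,\widetilde{g}}(B_R(p))$ via the Rayleigh variational characterization of the first Dirichlet eigenvalue, using as test function the positive first eigenfunction of $\Delta_{\widetilde{g}}$ on $B_R(p)$ pulled back by the distance function; the equality on the right of \eqref{Eq:Comparison} is already supplied by Corollary \ref{corollarifinal}, so the real content is the comparison of the two eigenvalues and its rigidity characterization. The mechanism making the comparison work is the conjunction of Proposition \ref{malsagxulo} (2), (3), (4), which says that $r$, $|\nabla r|=1$, and the area function $A_g(t)$ are shared by $g$ and $\widetilde{g}$.

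For the inequality, let $f_1$ denote the radial profile of a positive first eigenfunction of $\Delta_{\widetilde{g}}$ on $B_R(p)$ (Corollary \ref{corollarifinal}), and take the test function $\phi:=f_1\circ r$. Since $f_1(R)=0$ and $r$ is the common distance, $\phi$ vanishes on $\partial B_R(p)$; the Lipschitz character of $r$ together with $f_1'(0)=0$ places $\phi$ in $H_0^1(B_R(p))$. By Proposition \ref{malsagxulo} (3), $|\nabla\phi|_g^2=(f_1'(r))^2=|\widetilde{\nabla}\phi|_{\widetilde{g}}^2$, and by Proposition \ref{malsagxulo} (4), $A_g(t)=A_{\widetilde{g}}(t)$. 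Applying the coarea formula with respect to each metric therefore yields
\begin{equation*}
\frac{\int_{B_R(p)}|\nabla\phi|_g^2\,dV_g}{\int_{B_R(p)}\phi^2\,dV_g}
=\frac{\int_0^R(f_1'(t))^2A_g(t)\,dt}{\int_0^Rf_1^2(t)A_g(t)\,dt}
=\frac{\int_{B_R(p)}|\widetilde{\nabla}\phi|_{\widetilde{g}}^2\,dV_{\widetilde{g}}}{\int_{B_R(p)}\phi^2\,dV_{\widetilde{g}}}
=\lambda_{1,\widetilde{g}}(B_R(p)),
\end{equation*}
and the Rayleigh principle then gives $\lambda_{1,g}(B_R(p))\leq\lambda_{1,\widetilde{g}}(B_R(p))$.

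For the equality characterization, equality in the Rayleigh principle forces $\phi=f_1\circ r$ to be a first eigenfunction of $\Delta_g$ with eigenvalue $\lambda_{1,g}(B_R(p))$. Using $\Delta_g(f_1\circ r)=f_1''(r)+f_1'(r)\Delta_g r$ and subtracting the analogous expansion from \eqref{eq:laprotsym} for $\Delta_{\widetilde{g}}(f_1\circ r)$, I obtain
\begin{equation*}
f_1'(r)\Bigl(\Delta_g r-(n-1)\frac{\omega_g'(r)}{\omega_g(r)}\Bigr)=0.
\end{equation*}
Since $f_1'(r)<0$ on $(0,R]$ by Corollary \ref{corollarifinal}, $\Delta_g r$ must be a radial function; invoking the standard identification $\Delta_g r={\rm H}_{{\rm S}_{r(q)}(p)}$ of the Laplacian of the distance function with the inward mean curvature of the corresponding geodesic sphere, I conclude that ${\rm H}_{{\rm S}_t(p)}=h(t)$ with $h(t)=(n-1)\omega_g'(t)/\omega_g(t)$. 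Conversely, if ${\rm H}_{{\rm S}_t(p)}=h(t)$ is radial, the first variation of area gives $h(t)=A_g'(t)/A_g(t)=(n-1)\omega_g'(t)/\omega_g(t)$, so $\Delta_g(f_1\circ r)=\Delta_{\widetilde{g}}(f_1\circ r)=-\lambda_{1,\widetilde{g}}(B_R(p))\,(f_1\circ r)$ pointwise, exhibiting $f_1\circ r$ as a positive first eigenfunction of $\Delta_g$ and forcing the two first eigenvalues to coincide. The main delicate point is the rigidity direction: one must check that $\phi$ has enough regularity for equality in the Rayleigh quotient to promote it to a genuine eigenfunction, and one must keep careful track of the sign and normalization conventions relating $\Delta_g r$ to the inward mean curvature of the geodesic spheres.
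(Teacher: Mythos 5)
Your proposal is correct and follows essentially the same route as the paper: the Rayleigh quotient of the radial first eigenfunction of $\Delta_{\widetilde g}$ tested against $g$, the co-area formula together with $|\nabla r|_g=|\widetilde\nabla r|_{\widetilde g}=1$ and ${\rm A}_g={\rm A}_{\widetilde g}$ to identify the two Rayleigh quotients, and the rigidity discussion via $\Delta_g r={\rm H}_{{\rm S}_t(p)}$ and $f_1'<0$. The only cosmetic difference is that the paper expands $\Delta_g r$ explicitly as $\partial_r\ln\sqrt{\det G}$ and verifies the converse by an integral computation, whereas you invoke the first variation of area directly; the two are equivalent.
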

\begin{proof}
	To obtain upper bounds for the first eigenvalue we compute the Rayleigh quotient (see \cite{Ch1} for more information about the Rayleigh quotient) with respect to $({\rm B}_R(p),g)$ of the first eigenfunction on $({\rm B}_R(p),\widetilde{g})$, where $\widetilde{g}$ is the rotationally symmetric metric {tensor} of comparison defined in Section \ref{rotmetcomp}.
		
	Since $R<{\rm inj}_g(p)$, the exponential map $\exp_p: \mathcal{B}_R(\vec{0})\to {\rm B}_R(p)$ induces a diffeomorphism $\zeta:{\rm B}_R(p)\to \mathbb{D}_R(\vec{0})$ (see Section \ref{rotmetcomp}  for the definition of the the diffeomorphism $\zeta$). The metric tensor $g$ in ${\rm B}_R(p)-\{p\}$ can be expressed as
	$$
	g=dr\otimes dr+\sum_{i,j=1}^{n-1}G_{i,j}(r,\theta)\,d\theta^i\otimes d\theta^j,
	$$
	for some positive definite matrix $G_{i,j}$, where we have used the coordinate system 
	$$
	\begin{array}{lcl}
	q\mapsto (r(q),\theta^1(q), \cdots \theta^{n-1}(q)):=(r(q),\widetilde{\theta}^1(\pi(q)),\cdots,\widetilde{\theta}^{n-1}(\pi(q)))
	\end{array}
	$$	
	with the maps $q\mapsto r(q)=\Vert \zeta(q)\Vert$ and $q\mapsto \pi(q)=\frac{\zeta(q)}{r(q)}$ and $\{\widetilde{\theta^i}\}_{i=1}^{n-1}$ being a system of local coordinates on the sphere $\mathbb{S}^{n-1}_1$. Using this coordinates, the Riemannian volume element can be obtained as
	$$
	d{\rm V}_g=\sqrt{\det(G(r,\theta))}\,dr\wedge d\theta^1\wedge\cdots\wedge d\theta^{n-1}.
	$$
	The {area function of the geodesic} sphere ${\rm S}_t(p)$ of radius $t$ centered at $p$ is  
	$$
	{\rm A}_g(t)=\int_{\mathbb{S}^{n-1}_1}\sqrt{\det(G(t,\theta))}\,d\theta^1\wedge\cdots\wedge d\theta^{n-1}
	$$
	and the mean curvature vector field of the geodesic sphere ${\rm S}_t(p)$ of radius $t$ centered at $p$ can be expressed as
	$$
	\vec{\rm H}_{S_t(p)}=-\left.\frac{\partial}{\partial s}\ln{\sqrt{\det(G(s,\theta))}}\right\vert_{s=t}\partial r.
	$$
	Now, we will make use of the rotationally symmetric metric {tensor} of comparison $\widetilde{g}$ associated to $g$ using the warping function $\omega_g(t)=\left(\frac{{\rm A}_g(t)}{{\rm vol}\left({\mathbb{S}_1^{n-1}}\right)}\right)^{\frac{1}{n-1}},$ as 
	\begin{equation}\widetilde{g}=\left\lbrace\begin{array}{lcl}
		dr\otimes dr+(\omega_g^2\circ r)\pi^*g_{\mathbb{S}^{n-1}_1}, & {\rm on} & {\rm B}_R(p)-\{p\},\\
		\displaystyle\sum_{i=1}^ndx^i\otimes dx^i, & {\rm on} & p.
		\end{array}\right.
	\end{equation}
	The first positive eigenfunction $\phi_1$ and the first eigenvalue $\lambda_{1,\widetilde{g}}({\rm B}_R(p))$ of the Laplacian $\Delta_{\widetilde{g}}$ for the Dirichlet problem on ${\rm B}_R(p)$ are related by the Rayleigh quotient with respect to $\widetilde{g}$ 
	\begin{equation}\label{eq:Rayquo1}
		\lambda_{1,\widetilde{g}}({\rm B}_R(p))=\frac{\int_{{\rm B}_R(p)}\widetilde{g}(\widetilde{\nabla} \phi_1(q),\widetilde{\nabla} \phi_1(q))\,d{\rm V}_{\widetilde{g}}(q)}{\int_{{\rm B}_R(p)}\phi_1^2(q)\,d{\rm V}_{\widetilde{g}}(q)}.
	\end{equation}
	Our upper bound for $\lambda_{1,g}({\rm B}_R(p))$ in \eqref{Eq:Comparison} is obtained by using the Rayleigh quotient with respect to $g$ of the first eigenfunction $\phi_1$ with respect to $\widetilde{g}$, \emph{i.e.},
	\begin{equation}\label{eq:Rayquo2}
		\lambda_{1,g}({\rm B}_R(p))\leq \frac{\int_{{\rm B}_R(p)}g(\nabla \phi_1(q),\nabla \phi_1(q))\,d{\rm V}_{g}(q)}{\int_{{\rm B}_R(p)}\phi_1^2(q)\,d{\rm V}_{g}(q)}.
	\end{equation}
	But, the right hand side of the above inequality can be expressed as 
	\begin{equation}\label{eq:Rayquo3}
		\frac{\int_{{\rm B}_R(p)}g(\nabla \phi_1(q),\nabla \phi_1(q))\,d{\rm V}_{g}(q)}{\int_{{\rm B}_R(p)}\phi_1^2(q)\,d{\rm V}_{g}(q)}=\frac{\int_{{\rm B}_R(p)}\widetilde{g}(\widetilde{\nabla} \phi_1(q),\widetilde{\nabla} \phi_1(q))\,d{\rm V}_{\widetilde{g}}(q)}{\int_{{\rm B}_R(p)}\phi_1^2(q)\,d{\rm V}_{\widetilde{g}}(q)}
	\end{equation}
	because taking into account that $\phi_1(q)=f_1(r(q))$ is a decreasing radial function (see Corollary \ref{corollarifinal}) and by using the co-area formula (see \cite{Ch1}) and Proposition \ref{malsagxulo} 
	\begin{align*}
		\int_{{\rm B}_R(p)}\phi_1^2(q)\,d{\rm V}_g(q) & =\int_{{\rm B}_R(p)}\dfrac{f^2_1(r(q))}{g\left(\nabla r(q),\nabla r(q)\right)}\,g\left(\nabla r(q),\nabla r(q)\right)\,d{\rm V}_g(q)\\
		& = \int_0^R\left(\int_{\left\lbrace q\in M\,:\,r(q)=t\right\rbrace}\dfrac{f_1^2(r(q))}{g\left(\nabla r(q),\nabla r(q)\right)}\,d{\rm A}_g(q)\right)dt\\
		& = \int_0^R f_1^2(t) {\rm A}_g(t)\,dt=\int_0^R f_1^2(t) {\rm A}_{\widetilde{g}}(t)\,dt\\
		& = \int_0^R\left(\int_{\left\lbrace q\in M\,:\, r(q)=t\right\rbrace}\dfrac{f_1^2(r(q))}{\widetilde{g}\left(\widetilde{\nabla}r(q),\widetilde{\nabla}r(q)\right)}\,d{\rm A}_{\widetilde{g}}(q)\right)dt\\
		& = \int_{{\rm B}_R(p)}\phi_1^2(q)\,d{\rm V}_{\widetilde{g}}(q).
	\end{align*}
	Similarly, since the nominator $g(\nabla \phi_1(q),\nabla \phi_1(q))=(f_1'(r(q))^2$ is also a radial function,
	$$
	\int_{{\rm B}_R(p)}g( \nabla \phi_1(q), \nabla \phi_1(q))\,d{\rm V}_g(q)=\int_{{\rm B}_R(p)}\widetilde{g}(\widetilde{\nabla} \phi_1(q),\widetilde{\nabla} \phi_1(q))\,d{\rm V}_{\widetilde{g}}(q)
	$$
	Therefore, by equations \eqref{eq:Rayquo1} and \eqref{eq:Rayquo3}, by inequality \eqref{eq:Rayquo2} and by Corollary \ref{corollarifinal}, we obtain
	$$
	\lambda_{1,g}({\rm B}_R(p))\leq \lambda_{1,\widetilde{g}}({\rm B}_R(p))={\lim_{k\to \infty}\left(\frac{\int_0^RT_k^2(t){\rm A}_g(t)\,dt}{\int_0^RT_{k+1}^2(t){\rm A}_g(t)\,dt}\right)^{1/2}}.
	$$
	Furthermore, equality in the above inequality implies that $\phi_1$ is also a first positive eigenfunction of $\Delta_{g}$, therefore for any $q\in {\rm B}_R(p)$
	$$
	\Delta_g\phi_1(q)=f_1''(r(q))+f'_1(r(q))\frac{\partial}{\partial r}\left(\ln{\sqrt{\det(G(r,\theta))}}(q)\right)=-\lambda_{1,g}({\rm B}_R(p))f_1(r(q)).
	$$
	Then, for any point $q\in {\rm S}_t(p)$,
	\begin{equation}\label{eq:vectorial}
		\left(\vphantom{\dfrac{1}{1}}f_1''(t)+\lambda_{1,g}({\rm B}_R(p))f_1(t)\right)\partial r(q)=f_1'(t) \vec{\rm H}_{{\rm S}_t(p)}(q).    
	\end{equation}
	But, since $\phi_1$ is a first positive eigenfunction for $\Delta_{\widetilde{g}}$, by \eqref{eq:laprotsym}, 
	$$
	f_1''(t)+\lambda_{1,g}({\rm B}_R(p))f_1(t)=-(n-1)\frac{w'_g(t)}{w_g(t)} f_1'(t).
	$$
	Hence, from \eqref{eq:vectorial} and taking into account that $f_1'(t)< 0$ (see Corollary \ref{corollarifinal}), we can obtain the mean curvature vector field $\vec{\rm H}_{{\rm S}_t(p)}(q)$ for any point $q\in {\rm S}_t(p)$ as 
	$$
	\vec{\rm H}_{{\rm S}_t(p)}(q)=-(n-1)\frac{w'_g(t)}{w_g(t)}\partial r.
	$$  
	Therefore, the mean curvature of the geodesic spheres pointed inward given by
	$$
	{\rm H}_{{\rm S}_t(p)}(q)={g(}\vec{\rm H}_{{\rm S}_t(p)}(q), -\partial r{)}=(n-1)\frac{w'_g(t)}{w_g(t)}
	$$
	is a radial function as stated.
		
	On the other hand, if the mean curvature of the geodesic spheres is a radial function, \emph{i.e.}, $\vec{\rm H}_{{\rm S}_t(p)}(q)=-h(t)\partial r${,} we are proving that $\lambda_{1,g}({\rm B}_R(p))=\lambda_{1,\widetilde{g}}({\rm B}_R(p))$. Indeed, we can prove that $\phi_1$ is a positive eigenfunction of $\Delta_g$ because 
	\begin{equation}\label{equfinal}
		\begin{aligned}
			\Delta_g\phi_1(q)=&f_1''(r(q))+f'_1(r(q))\frac{\partial}{\partial r}\left(\ln{\sqrt{\det(G(r,\theta))}}(q)\right)\\
			=&f_1''(r(q))+f'_1(r(q))h(t){.}
		\end{aligned}
	\end{equation}
	But since
	$$
	\begin{aligned}
		(n-1)\frac{\omega'_g(t)}{\omega_g(t)}=&\frac{d}{dt}\ln \omega^{n-1}(t)=\left.\frac{d}{ds}\ln {\rm A}_g(s)\right\lvert_{s=t}\\
		=&\frac{1}{A_g(t)}\int_{\mathbb{S}^{n-1}_1}\left.\frac{\partial}{\partial s}\sqrt{\det(G(s,\theta))}\right\lvert_{s=t}d\theta^1\wedge\cdots\wedge d\theta^{n-1}\\
		=&\frac{1}{A_g(t)}\int_{\mathbb{S}^{n-1}_1}\frac{\left.\frac{\partial}{\partial s}\sqrt{\det(G(s,\theta))}\right\lvert_{s=t}}{\sqrt{\det(G(t,\theta))}}\sqrt{\det(G(t,\theta))}d\theta^1\wedge\cdots\wedge d\theta^{n-1}\\
		=&\frac{1}{A_g(t)}\int_{\mathbb{S}^{n-1}_1}h(t)\sqrt{\det(G(t,\theta))}d\theta^1\wedge\cdots\wedge d\theta^{n-1}\\
		=& h(t),
	\end{aligned}
	$$
	from equation \eqref{equfinal} and \eqref{eq:laprotsym}{, we have}
	$$
	\Delta_g\phi_1(q)=f_1''(r(q))+(n-1)\frac{\omega'_g(t)}{\omega_g(t)}f'_1(r(q))=\Delta_{\widetilde{g}}\phi_1=-\lambda_{1,\widetilde{g}}({\rm B}_R(p)) \phi_1  
	$$
	Hence, $\phi_1$ is a positive eigenfunction of $\Delta_g$ and the Theorem follows.
\end{proof}

\begin{remark}
	Observe that the function $h(t)$ of the Theorem \ref{MainThm} is
	$$
	h(t)=(n-1)\frac{w'_g(t)}{w_g(t)}. 
	$$
\end{remark}

\section{Proof of Theorem \ref{ChengTypeTheo}}\label{sec:ChengTypeTheo}
In this section we will prove Theorem \ref{ChengTypeTheo} which is a particular case of the following Theorem \ref{ChengTypeTheoGen} when $W(t)=S_{\kappa}(t)$ with 
$$
S_\kappa(t):=\left\lbrace
\begin{array}{rcl}
	\frac{\sin(\sqrt{\kappa}t)}{\sqrt{\kappa}},&{\rm if }&\kappa>0,\\
	t,&{\rm if }&\kappa=0,\\
	\frac{\sinh(\sqrt{-\kappa}t)}{\sqrt{-\kappa}},&{\rm if }&\kappa<0.
\end{array}
\right.
$$
\begin{theorem}\label{ChengTypeTheoGen}
	Let $(M,g)$ be a Riemannian manifold, {and} let $p\in M$ be a point of $M$ with injectivity radius ${\rm inj}_{g}(p)$. Let ${\rm B}_R(p)$ be the geodesic ball of radius $R$ centered at $p$ with $R<{\rm inj}_{g}(p)$. Let $W:[0,R]\to \mathbb{R}$ be a non-negative smooth function such that the metric tensor
	$$
	g_W=dr\otimes dr+(W^2\circ r)\pi^*g_{\mathbb{S}^{n-1}_1}
	$$
	is smooth on ${\rm B}_R(p)$. Suppose that {$R<{\rm inj}_g(p)$ and that} for any $t<{R}$ the function
	$$
	t\mapsto\frac{{\rm A}_g(t)}{{\rm A}_{g_W}(t)}
	$$
	is a decreasing function. {Then}, the first eigenvalue $\lambda_{1,g}(({\rm B}_R(p)))$ of the Laplacian for the Dirichlet problem on the geodesic ball ${\rm B}_R(p)$ of radius $R$ centered at $p$ is bounded by 
	\begin{equation}\label{cheng-ineq3}
		\lambda_{1,g}({{\rm B}_R(p)})\leq \lambda_{1,g_W}({{\rm B}_R(p)}),
	\end{equation}
	with equality in \eqref{cheng-ineq3} if and only if, for any $t\in(0,R{)}$, the mean curvature pointed inward ${\rm H}_{{\rm S}_t(p)}$ of the geodesic sphere ${\rm S}_t(p)$ is
	$$
	{\rm H}_{{\rm S}_t(p)}=(n-1)\frac{W'(t)}{W(t)}, \quad {\rm for\;{all} }\quad 0<t<R.
	$$
\end{theorem}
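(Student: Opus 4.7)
The plan is to establish the chain
\begin{equation*}
\lambda_{1,g}({\rm B}_R(p)) \;\leq\; \lambda_{1,\widetilde g}({\rm B}_R(p)) \;\leq\; \lambda_{1,g_W}({\rm B}_R(p)),
\end{equation*}
where $\widetilde g$ is the rotationally symmetric metric tensor of comparison associated to $g$, with warping function $\omega_g$ given by \eqref{eq:relationgeosphvol}. The first inequality is exactly Theorem \ref{MainThm}, so only the second requires a new argument. The key preliminary observation is that the hypothesis that ${\rm A}_g/{\rm A}_{g_W} = (\omega_g/W)^{n-1}$ is decreasing is equivalent, after taking logarithms, to $\omega_g'(t)/\omega_g(t) \leq W'(t)/W(t)$ for every $t \in (0, R)$.

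To prove $\lambda_{1,\widetilde g}({\rm B}_R(p)) \leq \lambda_{1,g_W}({\rm B}_R(p))$ I would apply Barta's inequality with the test function $\phi(q) = f_1(r(q))$, the positive radial first eigenfunction of $\Delta_{\widetilde g}$ on ${\rm B}_R(p)$ provided by Corollary \ref{corollarifinal} (so $f_1 > 0$, $f_1'(0) = 0$, and $f_1' < 0$ on $(0, R)$). Since for any radial function $\psi(r)$ one has $\Delta_h \psi = \psi'' + (n-1)(\log h)' \psi'$ with $h \in \{\omega_g, W\}$, the identity $\Delta_{\widetilde g}\phi = -\lambda_{1,\widetilde g}({\rm B}_R(p))\phi$ yields, on ${\rm B}_R(p)\setminus\{p\}$,
\begin{equation*}
-\frac{\Delta_{g_W}\phi}{\phi}(q) = \lambda_{1,\widetilde g}({\rm B}_R(p)) + (n-1)\left[\frac{W'(r)}{W(r)} - \frac{\omega_g'(r)}{\omega_g(r)}\right]\left(-\frac{f_1'(r)}{f_1(r)}\right),
\end{equation*}
where both factors of the correction term are nonnegative by the hypothesis and by Corollary \ref{corollarifinal}. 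One verifies that this expression extends continuously to $p$ with value $\lambda_{1,\widetilde g}({\rm B}_R(p))$: writing $\omega_g(t) = t F_g(t^2)$ and $W(t) = t F_W(t^2)$ with smooth $F_g, F_W$ satisfying $F_g(0) = F_W(0) = 1$ (cf.\ the proof of Theorem \ref{prop2.1}), both logarithmic derivatives share the same $1/t$ singularity which cancels in the difference, and $f_1'(r)$ vanishes to first order at $r = 0$. Hence $-\Delta_{g_W}\phi/\phi \geq \lambda_{1,\widetilde g}({\rm B}_R(p))$ pointwise on ${\rm B}_R(p)$, and Barta's inequality, applied on slightly smaller balls ${\rm B}_{R-\epsilon}(p)$ and then passing to $\epsilon \to 0$ to handle the vanishing of $\phi$ on $\partial {\rm B}_R(p)$, gives $\lambda_{1,g_W}({\rm B}_R(p)) \geq \lambda_{1,\widetilde g}({\rm B}_R(p))$.

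For the equality case, if $\lambda_{1,g}({\rm B}_R(p)) = \lambda_{1,g_W}({\rm B}_R(p))$ the chain forces both intermediate equalities. The first, together with the equality case of Theorem \ref{MainThm}, yields that ${\rm H}_{{\rm S}_t(p)}$ is radial and equal to $(n-1)\omega_g'(t)/\omega_g(t)$. To extract information from $\lambda_{1,\widetilde g}({\rm B}_R(p))=\lambda_{1,g_W}({\rm B}_R(p))$ I would use a Wronskian argument: letting $f_2$ be the first positive radial eigenfunction of $\Delta_{g_W}$, the Wronskian $\mathcal{W}(r) = f_1(r) f_2'(r) - f_1'(r) f_2(r)$ vanishes at $r = 0$ and at $r = R$, and a direct computation using both eigenfunction equations yields
\begin{equation*}
(\omega_g^{n-1}\mathcal{W})'(r) = (n-1)\, \omega_g^{n-1}(r)\left[\frac{\omega_g'(r)}{\omega_g(r)} - \frac{W'(r)}{W(r)}\right] f_1(r) f_2'(r),
\end{equation*}
whose right-hand side is pointwise nonnegative by the hypothesis and the sign of $f_2'$. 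Integrating from $0$ to $R$ forces this integrand to vanish identically on $(0, R)$; since $\omega_g, f_1 > 0$ and $f_2' < 0$ strictly there, one obtains $\omega_g'/\omega_g \equiv W'/W$ on $(0,R)$, and the normalization $\omega_g(t)/t, W(t)/t \to 1$ as $t \to 0$ then gives $\omega_g \equiv W$. Hence ${\rm H}_{{\rm S}_t(p)} = (n-1) W'(t)/W(t)$, as required. The converse implication is immediate: if ${\rm H}_{{\rm S}_t(p)} = (n-1) W'(t)/W(t)$, the equality case of Theorem \ref{MainThm} forces $\omega_g'/\omega_g\equiv W'/W$ and hence $\omega_g\equiv W$, so $\widetilde g = g_W$ on ${\rm B}_R(p)$ and both inequalities in the chain collapse to equalities.

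The main obstacle I anticipate is the technical verification at $r = 0$ and $r = R$ of both the Barta test (where $\phi$ vanishes on $\partial {\rm B}_R(p)$ and the logarithmic derivatives of $\omega_g$ and $W$ are singular at the center) and the Wronskian identity (where $\omega_g^{n-1}$ vanishes at $r = 0$ and the endpoint values of $\mathcal W$ must be justified by matching vanishing orders). Both issues are resolved using the Taylor expansions for smooth warping functions on a Riemannian model established in the proof of Theorem \ref{prop2.1}, which ensure that the putatively divergent expressions cancel to yield bounded, continuous quantities on $[0, R]$.
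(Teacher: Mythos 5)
Your proof is correct and reaches the same conclusion, but by a genuinely dual route to the paper's. For the inequality $\lambda_{1,\widetilde{g}} \leq \lambda_{1,g_W}$, the paper takes the first eigenfunction $\phi_{1,W}$ of $\Delta_{g_W}$ as the Barta test function and computes $-\Delta_{\widetilde{g}}\phi_{1,W}/\phi_{1,W} \leq \lambda_{1,g_W}$, invoking the $\sup$ side of Barta for $\widetilde{g}$; you instead take the eigenfunction $f_1$ of $\Delta_{\widetilde{g}}$ and bound $-\Delta_{g_W}\phi/\phi \geq \lambda_{1,\widetilde{g}}$, invoking the $\inf$ side for $g_W$. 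Both rest on the same monotonicity translation $\omega_g'/\omega_g \leq W'/W$ and the sign of $f_1'$ from Proposition \ref{prop-first-eigenvalue}, and both are valid; the only cosmetic difference is that the paper's choice produces an upper bound on a Rayleigh quotient while yours produces a lower bound, so they are mirror images. Where you add genuine value is the equality case: the paper's proof simply ends with ``the Theorem follows by Barta's Lemma and \eqref{equnua}'' without spelling out why equality forces $\omega_g \equiv W$, whereas you supply a complete Wronskian argument that does exactly this. I checked your identity $(\omega_g^{n-1}\mathcal{W})' = (n-1)\omega_g^{n-1}\left[\omega_g'/\omega_g - W'/W\right]f_1 f_2'$, which holds once the $\lambda$-terms cancel, and the boundary-term and sign analysis is right; an alternative and slightly shorter route would have been to integrate the pointwise Barta inequality $-\Delta_{\widetilde{g}}\phi_{1,W} \leq \lambda_{1,\widetilde{g}}\phi_{1,W}$ against $f_1\,d{\rm V}_{\widetilde{g}}$ and apply Green's identity, but your Wronskian version is equally rigorous. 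Your converse direction also correctly combines the equality characterization of Theorem \ref{MainThm} with the remark $h(t) = (n-1)\omega_g'/\omega_g$ to conclude $\omega_g \equiv W$. The technical verification near $r=0$ (cancellation of the $1/r$ singularities, integrability of the integrand of $(\omega_g^{n-1}\mathcal{W})'$) that you flag as the main obstacle does go through by the Taylor expansion $\omega(t) = t\,F(t^2)$, $F(0)=1$; one small simplification is that since $\phi = f_1\circ r$ is smooth on $\overline{{\rm B}_R(p)}$ by elliptic regularity, Barta applies directly on ${\rm B}_R(p)$ and the exhaustion by smaller balls is unnecessary.
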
 
\begin{proof}
	From $({\rm B}_R(p),g)$ we will symmetrize the metric tensor to obtain the rotationally symmetric metric tensor
	\begin{equation}
		\widetilde{g}=dr\otimes dr+(\omega^2_g\circ r)\pi^*g_{\mathbb{S}^{n-1}_1}
	\end{equation}
	with $\omega_g:[0,R)\to\mathbb{R}_+$ being the positive function given by 
	\begin{equation*}
		t\mapsto\omega_g(t):=\left(\frac{{\rm A}_g(t)}{{\rm vol}\left({\mathbb{S}_1^{n-1}}\right)} \right)^{\frac{1}{n-1}},
	\end{equation*}
	Hence, by using the Theorem \ref{MainThm}
	\begin{equation}\label{equnua}
		\lambda_{1,g}(({\rm B}_R(p)))\leq \lambda_{1,\widetilde{g}}(({\rm B}_R(p))).
	\end{equation}
	Moreover, since ${\rm A}_{\widetilde{g}}(t)={\rm A}_{g}(t)$, by hypothesis the function
	$$
	t\mapsto \frac{{\rm A}_{\widetilde{g}}(t)}{{\rm A}_{g_W}(t)}
	$$ 
	is assumed to be a decreasing function. Therefore
	$$
	\frac{\omega_g'(t)}{\omega_g(t)}\leq \frac{W'(t)}{W(t)},\quad \text{ for all } t\in [0,R].
	$$
	Let us denote by $\phi_{1,W}(q)=f_{1,W}(r(q))$ the first (radial) eigenfunction of the Laplacian for the Dirichlet problem  with respect to the metric tensor $g_{W}$. Then for any $q\in {\rm S}_t(p)$
	$$
	\Delta_{g_W}\phi_{1,W}(q)=f''_{1,W}(t)+(n-1)\frac{W'(t)}{W(t)}f'_{1,W}(t)=-\lambda_{1,g_{W}}\left({\rm B}_R(t)\right)f_{1,W}(t)
	$$
	Hence, for any $q\in {\rm B}_R(p)$ with $r(q)=t$,
	\begin{align*}
		-\frac{\triangle_{\widetilde{g}}\phi_{1,W}(q)}{\phi_{1,W}(q)}  =&\frac{-f''_{1,W}(t)-(n-1)\frac{\omega_g'(t)}{\omega_g(t)}f'_{1,W}(t)}{f_{1,W}(t)}\\ \leq&\frac{-f''_{1,W}(t)-(n-1)\frac{W'(t)}{W(t)}f'_{1,W}(t)}{f_{1,W}(t)}\\
		=&\lambda_{1,g_W}({\rm B}_R(p)). 
	\end{align*}
	Finally the Theorem follows by using Barta's Lemma (see \cite{Ch1} for instance) and \eqref{equnua}.	\end{proof}

%%%%%%%%%%%%%%%%%%%%%%%%%
%%%%%%%%%%%%%%%%%%%%%%%%%%%%%%%%%%
 
\end{document}